\definecolor{airforceblue}{rgb}{0.36, 0.54, 0.66}			    
\definecolor{aliceblue}{rgb}{0.94, 0.97, 1.0}				    
\definecolor{alizarin}{rgb}{0.82, 0.1, 0.26}				    
\definecolor{almond}{rgb}{0.94, 0.87, 0.8}					    
\definecolor{amaranth}{rgb}{0.9, 0.17, 0.31}				    
\definecolor{amber}{rgb}{1.0, 0.75, 0.0}						
\definecolor{amber(sae/ece)}{rgb}{1.0, 0.49, 0.0}	            
\definecolor{americanrose}{rgb}{1.0, 0.01, 0.24}	            
\definecolor{amethyst}{rgb}{0.6, 0.4, 0.8}	                    
\definecolor{anti-flashwhite}{rgb}{0.95, 0.95, 0.96}	        
\definecolor{antiquebrass}{rgb}{0.8, 0.58, 0.46}	            
\definecolor{antiquefuchsia}{rgb}{0.57, 0.36, 0.51}	            
\definecolor{antiquewhite}{rgb}{0.98, 0.92, 0.84}	            
\definecolor{ao}{rgb}{0.0, 0.0, 1.0}	                        
\definecolor{ao(english)}{rgb}{0.0, 0.5, 0.0}	                
\definecolor{applegreen}{rgb}{0.55, 0.71, 0.0}	                
\definecolor{apricot}{rgb}{0.98, 0.81, 0.69}	                
\definecolor{aqua}{rgb}{0.0, 1.0, 1.0}	                        
\definecolor{aquamarine}{rgb}{0.5, 1.0, 0.83}	                
\definecolor{armygreen}{rgb}{0.29, 0.33, 0.13}	                
\definecolor{arsenic}{rgb}{0.23, 0.27, 0.29}	                
\definecolor{arylideyellow}{rgb}{0.91, 0.84, 0.42}	            
\definecolor{ashgrey}{rgb}{0.7, 0.75, 0.71}	                    
\definecolor{asparagus}{rgb}{0.53, 0.66, 0.42}	                
\definecolor{atomictangerine}{rgb}{1.0, 0.6, 0.4}	            
\definecolor{auburn}{rgb}{0.43, 0.21, 0.1}	                    
\definecolor{aureolin}{rgb}{0.99, 0.93, 0.0}	                
\definecolor{aurometalsaurus}{rgb}{0.43, 0.5, 0.5}	            
\definecolor{awesome}{rgb}{1.0, 0.13, 0.32}	                    
\definecolor{azure(colorwheel)}{rgb}{0.0, 0.5, 1.0}	            
\definecolor{azure(web)(azuremist)}{rgb}{0.94, 1.0, 1.0}	    
\definecolor{babyblue}{rgb}{0.54, 0.81, 0.94}	                
\definecolor{babyblueeyes}{rgb}{0.63, 0.79, 0.95}	            
\definecolor{babypink}{rgb}{0.96, 0.76, 0.76}	                
\definecolor{ballblue}{rgb}{0.13, 0.67, 0.8}	                
\definecolor{bananamania}{rgb}{0.98, 0.91, 0.71}	            
\definecolor{bananayellow}{rgb}{1.0, 0.88, 0.21}	            
\definecolor{battleshipgrey}{rgb}{0.52, 0.52, 0.51}	            
\definecolor{bazaar}{rgb}{0.6, 0.47, 0.48}	                    
\definecolor{beaublue}{rgb}{0.74, 0.83, 0.9}	                
\definecolor{beaver}{rgb}{0.62, 0.51, 0.44}	                    
\definecolor{beige}{rgb}{0.96, 0.96, 0.86}	                    
\definecolor{bisque}{rgb}{1.0, 0.89, 0.77}	                    
\definecolor{bistre}{rgb}{0.24, 0.17, 0.12}	                    
\definecolor{bittersweet}{rgb}{1.0, 0.44, 0.37}	                
\definecolor{black}{rgb}{0.0, 0.0, 0.0}	                        
\definecolor{blanchedalmond}{rgb}{1.0, 0.92, 0.8}	            
\definecolor{bleudefrance}{rgb}{0.19, 0.55, 0.91}	            
\definecolor{blizzardblue}{rgb}{0.67, 0.9, 0.93}	            
\definecolor{blond}{rgb}{0.98, 0.94, 0.75}	                    
\definecolor{blue}{rgb}{0.0, 0.0, 1.0}	                        
\definecolor{blue(munsell)}{rgb}{0.0, 0.5, 0.69}	            
\definecolor{blue(ncs)}{rgb}{0.0, 0.53, 0.74}	                
\definecolor{blue(pigment)}{rgb}{0.2, 0.2, 0.6}	                
\definecolor{blue(ryb)}{rgb}{0.01, 0.28, 1.0}	                
\definecolor{bluebell}{rgb}{0.64, 0.64, 0.82}	                
\definecolor{bluegray}{rgb}{0.4, 0.6, 0.8}	                    
\definecolor{blue-green}{rgb}{0.0, 0.87, 0.87}	                
\definecolor{blue-violet}{rgb}{0.54, 0.17, 0.89}	            
\definecolor{blush}{rgb}{0.87, 0.36, 0.51}	                    
\definecolor{bole}{rgb}{0.47, 0.27, 0.23}	                    
\definecolor{bondiblue}{rgb}{0.0, 0.58, 0.71}	                
\definecolor{bostonuniversityred}{rgb}{0.8, 0.0, 0.0}	        
\definecolor{brandeisblue}{rgb}{0.0, 0.44, 1.0}	                
\definecolor{brass}{rgb}{0.71, 0.65, 0.26}	                    
\definecolor{brickred}{rgb}{0.8, 0.25, 0.33}	                
\definecolor{brightcerulean}{rgb}{0.11, 0.67, 0.84}	            
\definecolor{brightgreen}{rgb}{0.4, 1.0, 0.0}	                
\definecolor{brightlavender}{rgb}{0.75, 0.58, 0.89}	            
\definecolor{brightmaroon}{rgb}{0.76, 0.13, 0.28}	            
\definecolor{brightpink}{rgb}{1.0, 0.0, 0.5}	                
\definecolor{brightturquoise}{rgb}{0.03, 0.91, 0.87}	        
\definecolor{brightube}{rgb}{0.82, 0.62, 0.91}	                
\definecolor{brilliantlavender}{rgb}{0.96, 0.73, 1.0}	        
\definecolor{brilliantrose}{rgb}{1.0, 0.33, 0.64}	            
\definecolor{brinkpink}{rgb}{0.98, 0.38, 0.5}	                
\definecolor{britishracinggreen}{rgb}{0.0, 0.26, 0.15}	        
\definecolor{bronze}{rgb}{0.8, 0.5, 0.2}	                    
\definecolor{brown(traditional)}{rgb}{0.59, 0.29, 0.0}	        
\definecolor{brown(web)}{rgb}{0.65, 0.16, 0.16}	                
\definecolor{bubblegum}{rgb}{0.99, 0.76, 0.8}	                
\definecolor{bubbles}{rgb}{0.91, 1.0, 1.0}	                    
\definecolor{buff}{rgb}{0.94, 0.86, 0.51}	                    
\definecolor{bulgarianrose}{rgb}{0.28, 0.02, 0.03}	            
\definecolor{burgundy}{rgb}{0.5, 0.0, 0.13}	                    
\definecolor{burlywood}{rgb}{0.87, 0.72, 0.53}	                
\definecolor{burntorange}{rgb}{0.8, 0.33, 0.0}	                
\definecolor{burntsienna}{rgb}{0.91, 0.45, 0.32}	            
\definecolor{burntumber}{rgb}{0.54, 0.2, 0.14}	                
\definecolor{byzantine}{rgb}{0.74, 0.2, 0.64}	                
\definecolor{byzantium}{rgb}{0.44, 0.16, 0.39}	                
\definecolor{cadet}{rgb}{0.33, 0.41, 0.47}	                    
\definecolor{cadetblue}{rgb}{0.37, 0.62, 0.63}	                
\definecolor{cadetgrey}{rgb}{0.57, 0.64, 0.69}	                
\definecolor{cadmiumgreen}{rgb}{0.0, 0.42, 0.24}	            
\definecolor{cadmiumorange}{rgb}{0.93, 0.53, 0.18}	            
\definecolor{cadmiumred}{rgb}{0.89, 0.0, 0.13}	                
\definecolor{cadmiumyellow}{rgb}{1.0, 0.96, 0.0}	            
\definecolor{calpolypomonagreen}{rgb}{0.12, 0.3, 0.17}	        
\definecolor{cambridgeblue}{rgb}{0.64, 0.76, 0.68}	            
\definecolor{camel}{rgb}{0.76, 0.6, 0.42}	                    
\definecolor{camouflagegreen}{rgb}{0.47, 0.53, 0.42}	        
\definecolor{canaryyellow}{rgb}{1.0, 0.94, 0.0}	                
\definecolor{candyapplered}{rgb}{1.0, 0.03, 0.0}	            
\definecolor{candypink}{rgb}{0.89, 0.44, 0.48}	                
\definecolor{capri}{rgb}{0.0, 0.75, 1.0}	                    
\definecolor{caputmortuum}{rgb}{0.35, 0.15, 0.13}	            
\definecolor{cardinal}{rgb}{0.77, 0.12, 0.23}	                
\definecolor{caribbeangreen}{rgb}{0.0, 0.8, 0.6}	            
\definecolor{carmine}{rgb}{0.59, 0.0, 0.09}	                    
\definecolor{carminepink}{rgb}{0.92, 0.3, 0.26}	                
\definecolor{carminered}{rgb}{1.0, 0.0, 0.22}	                
\definecolor{carnationpink}{rgb}{1.0, 0.65, 0.79}	            
\definecolor{carnelian}{rgb}{0.7, 0.11, 0.11}	                
\definecolor{carolinablue}{rgb}{0.6, 0.73, 0.89}	            
\definecolor{carrotorange}{rgb}{0.93, 0.57, 0.13}	            
\definecolor{ceil}{rgb}{0.57, 0.63, 0.81}	                    
\definecolor{celadon}{rgb}{0.67, 0.88, 0.69}	                
\definecolor{celestialblue}{rgb}{0.29, 0.59, 0.82}	            
\definecolor{cerise}{rgb}{0.87, 0.19, 0.39}	                    
\definecolor{cerisepink}{rgb}{0.93, 0.23, 0.51}	                
\definecolor{cerulean}{rgb}{0.0, 0.48, 0.65}	                
\definecolor{ceruleanblue}{rgb}{0.16, 0.32, 0.75}	            
\definecolor{chamoisee}{rgb}{0.63, 0.47, 0.35}	                
\definecolor{champagne}{rgb}{0.97, 0.91, 0.81}	                
\definecolor{charcoal}{rgb}{0.21, 0.27, 0.31}	                
\definecolor{chartreuse(traditional)}{rgb}{0.87, 1.0, 0.0}	    
\definecolor{chartreuse(web)}{rgb}{0.5, 1.0, 0.0}	            
\definecolor{cherryblossompink}{rgb}{1.0, 0.72, 0.77}	        
\definecolor{chestnut}{rgb}{0.8, 0.36, 0.36}	                
\definecolor{chocolate(traditional)}{rgb}{0.48, 0.25, 0.0}	    
\definecolor{chocolate(web)}{rgb}{0.82, 0.41, 0.12}	            
\definecolor{chromeyellow}{rgb}{1.0, 0.65, 0.0}	                
\definecolor{cinereous}{rgb}{0.6, 0.51, 0.48}	                
\definecolor{cinnabar}{rgb}{0.89, 0.26, 0.2}	                
\definecolor{cinnamon}{rgb}{0.82, 0.41, 0.12}	                
\definecolor{citrine}{rgb}{0.89, 0.82, 0.04}	                
\definecolor{classicrose}{rgb}{0.98, 0.8, 0.91}	                
\definecolor{cobalt}{rgb}{0.0, 0.28, 0.67}	                    
\definecolor{cocoabrown}{rgb}{0.82, 0.41, 0.12}	                
\definecolor{columbiablue}{rgb}{0.61, 0.87, 1.0}	            
\definecolor{coolblack}{rgb}{0.0, 0.18, 0.39}	                
\definecolor{coolgrey}{rgb}{0.55, 0.57, 0.67}	                
\definecolor{copper}{rgb}{0.72, 0.45, 0.2}	                    
\definecolor{copperrose}{rgb}{0.6, 0.4, 0.4}	                
\definecolor{coquelicot}{rgb}{1.0, 0.22, 0.0}	                
\definecolor{coral}{rgb}{1.0, 0.5, 0.31}	                    
\definecolor{coralpink}{rgb}{0.97, 0.51, 0.47}	                
\definecolor{coralred}{rgb}{1.0, 0.25, 0.25}	                
\definecolor{cordovan}{rgb}{0.54, 0.25, 0.27}	                
\definecolor{corn}{rgb}{0.98, 0.93, 0.36}	                    
\definecolor{cornellred}{rgb}{0.7, 0.11, 0.11}	                
\definecolor{cornflowerblue}{rgb}{0.39, 0.58, 0.93}	            
\definecolor{cornsilk}{rgb}{1.0, 0.97, 0.86}	                
\definecolor{cosmiclatte}{rgb}{1.0, 0.97, 0.91}	                
\definecolor{cottoncandy}{rgb}{1.0, 0.74, 0.85}	                
\definecolor{cream}{rgb}{1.0, 0.99, 0.82}	                    
\definecolor{crimson}{rgb}{0.86, 0.08, 0.24}	                
\definecolor{crimsonglory}{rgb}{0.75, 0.0, 0.2}	                
\definecolor{cyan}{rgb}{0.0, 1.0, 1.0}	                        
\definecolor{cyan(process)}{rgb}{0.0, 0.72, 0.92}	            
\definecolor{daffodil}{rgb}{1.0, 1.0, 0.19}	                    
\definecolor{dandelion}{rgb}{0.94, 0.88, 0.19}	                
\definecolor{darkblue}{rgb}{0.0, 0.0, 0.55}	                    
\definecolor{darkbrown}{rgb}{0.4, 0.26, 0.13}	                
\definecolor{darkbyzantium}{rgb}{0.36, 0.22, 0.33}	            
\definecolor{darkcandyapplered}{rgb}{0.64, 0.0, 0.0}	        
\definecolor{darkcerulean}{rgb}{0.03, 0.27, 0.49}	            
\definecolor{darkchampagne}{rgb}{0.76, 0.7, 0.5}	            
\definecolor{darkchestnut}{rgb}{0.6, 0.41, 0.38}	            
\definecolor{darkcoral}{rgb}{0.8, 0.36, 0.27}	                
\definecolor{darkcyan}{rgb}{0.0, 0.55, 0.55}	                
\definecolor{darkelectricblue}{rgb}{0.33, 0.41, 0.47}	        
\definecolor{darkgoldenrod}{rgb}{0.72, 0.53, 0.04}	            
\definecolor{darkgray}{rgb}{0.66, 0.66, 0.66}	                
\definecolor{darkgreen}{rgb}{0.0, 0.2, 0.13}	                
\definecolor{darkjunglegreen}{rgb}{0.1, 0.14, 0.13}	            
\definecolor{darkkhaki}{rgb}{0.74, 0.72, 0.42}	                
\definecolor{darklava}{rgb}{0.28, 0.24, 0.2}	                
\definecolor{darklavender}{rgb}{0.45, 0.31, 0.59}	            
\definecolor{darkmagenta}{rgb}{0.55, 0.0, 0.55}	                
\definecolor{darkmidnightblue}{rgb}{0.0, 0.2, 0.4}	            
\definecolor{darkolivegreen}{rgb}{0.33, 0.42, 0.18}	            
\definecolor{darkorange}{rgb}{1.0, 0.55, 0.0}	                
\definecolor{darkorchid}{rgb}{0.6, 0.2, 0.8}	                
\definecolor{darkpastelblue}{rgb}{0.47, 0.62, 0.8}	            
\definecolor{darkpastelgreen}{rgb}{0.01, 0.75, 0.24}	        
\definecolor{darkpastelpurple}{rgb}{0.59, 0.44, 0.84}	        
\definecolor{darkpastelred}{rgb}{0.76, 0.23, 0.13}	            
\definecolor{darkpink}{rgb}{0.91, 0.33, 0.5}	                
\definecolor{darkpowderblue}{rgb}{0.0, 0.2, 0.6}	            
\definecolor{darkraspberry}{rgb}{0.53, 0.15, 0.34}	            
\definecolor{darkred}{rgb}{0.55, 0.0, 0.0}	                    
\definecolor{darksalmon}{rgb}{0.91, 0.59, 0.48}	                
\definecolor{darkscarlet}{rgb}{0.34, 0.01, 0.1}	                
\definecolor{darkseagreen}{rgb}{0.56, 0.74, 0.56}	            
\definecolor{darksienna}{rgb}{0.24, 0.08, 0.08}	                
\definecolor{darkslateblue}{rgb}{0.28, 0.24, 0.55}	            
\definecolor{darkslategray}{rgb}{0.18, 0.31, 0.31}	            
\definecolor{darkspringgreen}{rgb}{0.09, 0.45, 0.27}	        
\definecolor{darktan}{rgb}{0.57, 0.51, 0.32}	                
\definecolor{darktangerine}{rgb}{1.0, 0.66, 0.07}	            
\definecolor{darktaupe}{rgb}{0.28, 0.24, 0.2}	                
\definecolor{darkterracotta}{rgb}{0.8, 0.31, 0.36}	            
\definecolor{darkturquoise}{rgb}{0.0, 0.81, 0.82}	            
\definecolor{darkviolet}{rgb}{0.58, 0.0, 0.83}	                
\definecolor{dartmouthgreen}{rgb}{0.05, 0.5, 0.06}	            
\definecolor{davy\'sgrey}{rgb}{0.33, 0.33, 0.33}	            
\definecolor{debianred}{rgb}{0.84, 0.04, 0.33}	                
\definecolor{deepcarmine}{rgb}{0.66, 0.13, 0.24}	            
\definecolor{deepcarminepink}{rgb}{0.94, 0.19, 0.22}	        
\definecolor{deepcarrotorange}{rgb}{0.91, 0.41, 0.17}	        
\definecolor{deepcerise}{rgb}{0.85, 0.2, 0.53}	                
\definecolor{deepchampagne}{rgb}{0.98, 0.84, 0.65}	            
\definecolor{deepchestnut}{rgb}{0.73, 0.31, 0.28}	            
\definecolor{deepfuchsia}{rgb}{0.76, 0.33, 0.76}	            
\definecolor{deepjunglegreen}{rgb}{0.0, 0.29, 0.29}	            
\definecolor{deeplilac}{rgb}{0.6, 0.33, 0.73}	                
\definecolor{deepmagenta}{rgb}{0.8, 0.0, 0.8}	                
\definecolor{deeppeach}{rgb}{1.0, 0.8, 0.64}	                
\definecolor{deeppink}{rgb}{1.0, 0.08, 0.58}	                
\definecolor{deepsaffron}{rgb}{1.0, 0.6, 0.2}	                
\definecolor{deepskyblue}{rgb}{0.0, 0.75, 1.0}	                
\definecolor{denim}{rgb}{0.08, 0.38, 0.74}	                    
\definecolor{desert}{rgb}{0.76, 0.6, 0.42}	                    
\definecolor{desertsand}{rgb}{0.93, 0.79, 0.69}	                
\definecolor{dimgray}{rgb}{0.41, 0.41, 0.41}	                
\definecolor{dodgerblue}{rgb}{0.12, 0.56, 1.0}	                
\definecolor{dogwoodrose}{rgb}{0.84, 0.09, 0.41}	            
\definecolor{dollarbill}{rgb}{0.52, 0.73, 0.4}	                
\definecolor{drab}{rgb}{0.59, 0.44, 0.09}	                    
\definecolor{dukeblue}{rgb}{0.0, 0.0, 0.61}	                    
\definecolor{earthyellow}{rgb}{0.88, 0.66, 0.37}	            
\definecolor{ecru}{rgb}{0.76, 0.7, 0.5}	                        
\definecolor{eggplant}{rgb}{0.38, 0.25, 0.32}	                
\definecolor{eggshell}{rgb}{0.94, 0.92, 0.84}	                
\definecolor{egyptianblue}{rgb}{0.06, 0.2, 0.65}	            
\definecolor{electricblue}{rgb}{0.49, 0.98, 1.0}	            
\definecolor{electriccrimson}{rgb}{1.0, 0.0, 0.25}	            
\definecolor{electriccyan}{rgb}{0.0, 1.0, 1.0}	                
\definecolor{electricgreen}{rgb}{0.0, 1.0, 0.0}	                
\definecolor{electricindigo}{rgb}{0.44, 0.0, 1.0}	            
\definecolor{electriclavender}{rgb}{0.96, 0.73, 1.0}	        
\definecolor{electriclime}{rgb}{0.8, 1.0, 0.0}	                
\definecolor{electricpurple}{rgb}{0.75, 0.0, 1.0}	            
\definecolor{electricultramarine}{rgb}{0.25, 0.0, 1.0}	        
\definecolor{electricviolet}{rgb}{0.56, 0.0, 1.0}	            
\definecolor{electricyellow}{rgb}{1.0, 1.0, 0.0}	            
\definecolor{emerald}{rgb}{0.31, 0.78, 0.47}	                
\definecolor{etonblue}{rgb}{0.59, 0.78, 0.64}	                
\definecolor{fallow}{rgb}{0.76, 0.6, 0.42}	                    
\definecolor{falured}{rgb}{0.5, 0.09, 0.09}	                    
\definecolor{fandango}{rgb}{0.71, 0.2, 0.54}	                
\definecolor{fashionfuchsia}{rgb}{0.96, 0.0, 0.63}	            
\definecolor{fawn}{rgb}{0.9, 0.67, 0.44}	                    
\definecolor{feldgrau}{rgb}{0.3, 0.36, 0.33}	                
\definecolor{ferngreen}{rgb}{0.31, 0.47, 0.26}	                
\definecolor{ferrarired}{rgb}{1.0, 0.11, 0.0}	                
\definecolor{fielddrab}{rgb}{0.42, 0.33, 0.12}	                
\definecolor{firebrick}{rgb}{0.7, 0.13, 0.13}	                
\definecolor{fireenginered}{rgb}{0.81, 0.09, 0.13}	            
\definecolor{flame}{rgb}{0.89, 0.35, 0.13}	                    
\definecolor{flamingopink}{rgb}{0.99, 0.56, 0.67}	            
\definecolor{flavescent}{rgb}{0.97, 0.91, 0.56}	                
\definecolor{flax}{rgb}{0.93, 0.86, 0.51}	                    
\definecolor{floralwhite}{rgb}{1.0, 0.98, 0.94}	                
\definecolor{fluorescentorange}{rgb}{1.0, 0.75, 0.0}	        
\definecolor{fluorescentpink}{rgb}{1.0, 0.08, 0.58}	            
\definecolor{fluorescentyellow}{rgb}{0.8, 1.0, 0.0}	            
\definecolor{folly}{rgb}{1.0, 0.0, 0.31}	                    
\definecolor{forestgreen(traditional)}{rgb}{0.0, 0.27, 0.13}	
\definecolor{forestgreen(web)}{rgb}{0.13, 0.55, 0.13}	        
\definecolor{frenchbeige}{rgb}{0.65, 0.48, 0.36}	            
\definecolor{frenchblue}{rgb}{0.0, 0.45, 0.73}	                
\definecolor{frenchlilac}{rgb}{0.53, 0.38, 0.56}	            
\definecolor{frenchrose}{rgb}{0.96, 0.29, 0.54}	                
\definecolor{fuchsia}{rgb}{1.0, 0.0, 1.0}	                    
\definecolor{fuchsiapink}{rgb}{1.0, 0.47, 1.0}	                
\definecolor{fulvous}{rgb}{0.86, 0.52, 0.0}	                    
\definecolor{fuzzywuzzy}{rgb}{0.8, 0.4, 0.4}	                
\definecolor{gainsboro}{rgb}{0.86, 0.86, 0.86}	                
\definecolor{gamboge}{rgb}{0.89, 0.61, 0.06}	                
\definecolor{ghostwhite}{rgb}{0.97, 0.97, 1.0}	                
\definecolor{ginger}{rgb}{0.69, 0.4, 0.0}	                    
\definecolor{glaucous}{rgb}{0.38, 0.51, 0.71}	                
\definecolor{gold(metallic)}{rgb}{0.83, 0.69, 0.22}	            
\definecolor{gold(web)(golden)}{rgb}{1.0, 0.84, 0.0}	        
\definecolor{goldenbrown}{rgb}{0.6, 0.4, 0.08}	                
\definecolor{goldenpoppy}{rgb}{0.99, 0.76, 0.0}	                
\definecolor{goldenyellow}{rgb}{1.0, 0.87, 0.0}	                
\definecolor{goldenrod}{rgb}{0.85, 0.65, 0.13}	                
\definecolor{grannysmithapple}{rgb}{0.66, 0.89, 0.63}	        
\definecolor{gray}{rgb}{0.5, 0.5, 0.5}	                        
\definecolor{gray(html/cssgray)}{rgb}{0.5, 0.5, 0.5}	        
\definecolor{gray(x11gray)}{rgb}{0.75, 0.75, 0.75}	            
\definecolor{gray-asparagus}{rgb}{0.27, 0.35, 0.27}	            
\definecolor{green(colorwheel)(x11green)}{rgb}{0.0, 1.0, 0.0}	
\definecolor{green(html/cssgreen)}{rgb}{0.0, 0.5, 0.0}	        
\definecolor{green(munsell)}{rgb}{0.0, 0.66, 0.47}	            
\definecolor{green(ncs)}{rgb}{0.0, 0.62, 0.42}	                
\definecolor{green(pigment)}{rgb}{0.0, 0.65, 0.31}	            
\definecolor{green(ryb)}{rgb}{0.4, 0.69, 0.2}	                
\definecolor{green-yellow}{rgb}{0.68, 1.0, 0.18}	            
\definecolor{grullo}{rgb}{0.66, 0.6, 0.53}	                    
\definecolor{guppiegreen}{rgb}{0.0, 1.0, 0.5}	                
\definecolor{halayaube}{rgb}{0.4, 0.22, 0.33}	                
\definecolor{hanblue}{rgb}{0.27, 0.42, 0.81}	                
\definecolor{hanpurple}{rgb}{0.32, 0.09, 0.98}	                
\definecolor{hansayellow}{rgb}{0.91, 0.84, 0.42}	            
\definecolor{harlequin}{rgb}{0.25, 1.0, 0.0}	                
\definecolor{harvardcrimson}{rgb}{0.79, 0.0, 0.09}	            
\definecolor{harvestgold}{rgb}{0.85, 0.57, 0.0}	                
\definecolor{heartgold}{rgb}{0.5, 0.5, 0.0}	                    
\definecolor{heliotrope}{rgb}{0.87, 0.45, 1.0}	                
\definecolor{hollywoodcerise}{rgb}{0.96, 0.0, 0.63}	            
\definecolor{honeydew}{rgb}{0.94, 1.0, 0.94}	                
\definecolor{hooker\'sgreen}{rgb}{0.0, 0.44, 0.0}	            
\definecolor{hotmagenta}{rgb}{1.0, 0.11, 0.81}	                
\definecolor{hotpink}{rgb}{1.0, 0.41, 0.71}	                    
\definecolor{huntergreen}{rgb}{0.21, 0.37, 0.23}	            
\definecolor{iceberg}{rgb}{0.44, 0.65, 0.82}	                
\definecolor{icterine}{rgb}{0.99, 0.97, 0.37}	                
\definecolor{inchworm}{rgb}{0.7, 0.93, 0.36}	                
\definecolor{indiagreen}{rgb}{0.07, 0.53, 0.03}	                
\definecolor{indianred}{rgb}{0.8, 0.36, 0.36}	                
\definecolor{indianyellow}{rgb}{0.89, 0.66, 0.34}	            
\definecolor{indigo(dye)}{rgb}{0.0, 0.25, 0.42}	                
\definecolor{indigo(web)}{rgb}{0.29, 0.0, 0.51}	                
\definecolor{internationalkleinblue}{rgb}{0.0, 0.18, 0.65}	    
\definecolor{internationalorange}{rgb}{1.0, 0.31, 0.0}	        
\definecolor{iris}{rgb}{0.35, 0.31, 0.81}	                    
\definecolor{isabelline}{rgb}{0.96, 0.94, 0.93}	                
\definecolor{islamicgreen}{rgb}{0.0, 0.56, 0.0}	                
\definecolor{ivory}{rgb}{1.0, 1.0, 0.94}	                    
\definecolor{jade}{rgb}{0.0, 0.66, 0.42}	                    
\definecolor{jasper}{rgb}{0.84, 0.23, 0.24}	                    
\definecolor{jazzberryjam}{rgb}{0.65, 0.04, 0.37}	            
\definecolor{jonquil}{rgb}{0.98, 0.85, 0.37}	                
\definecolor{junebud}{rgb}{0.74, 0.85, 0.34}	                
\definecolor{junglegreen}{rgb}{0.16, 0.67, 0.53}	            
\definecolor{kellygreen}{rgb}{0.3, 0.73, 0.09}	                
\definecolor{khaki(html/css)(khaki)}{rgb}{0.76, 0.69, 0.57}	    
\definecolor{khaki(x11)(lightkhaki)}{rgb}{0.94, 0.9, 0.55}	    
\definecolor{lasallegreen}{rgb}{0.03, 0.47, 0.19}	            
\definecolor{languidlavender}{rgb}{0.84, 0.79, 0.87}	        
\definecolor{lapislazuli}{rgb}{0.15, 0.38, 0.61}	            
\definecolor{laserlemon}{rgb}{1.0, 1.0, 0.13}	                
\definecolor{lava}{rgb}{0.81, 0.06, 0.13}	                    
\definecolor{lavender(floral)}{rgb}{0.71, 0.49, 0.86}	        
\definecolor{lavender(web)}{rgb}{0.9, 0.9, 0.98}	            
\definecolor{lavenderblue}{rgb}{0.8, 0.8, 1.0}	                
\definecolor{lavenderblush}{rgb}{1.0, 0.94, 0.96}	            
\definecolor{lavendergray}{rgb}{0.77, 0.76, 0.82}	            
\definecolor{lavenderindigo}{rgb}{0.58, 0.34, 0.92}	            
\definecolor{lavendermagenta}{rgb}{0.93, 0.51, 0.93}	        
\definecolor{lavendermist}{rgb}{0.9, 0.9, 0.98}	                
\definecolor{lavenderpink}{rgb}{0.98, 0.68, 0.82}	            
\definecolor{lavenderpurple}{rgb}{0.59, 0.48, 0.71}	            
\definecolor{lavenderrose}{rgb}{0.98, 0.63, 0.89}	            
\definecolor{lawngreen}{rgb}{0.49, 0.99, 0.0}	                
\definecolor{lemon}{rgb}{1.0, 0.97, 0.0}	                    
\definecolor{lemonchiffon}{rgb}{1.0, 0.98, 0.8}	                
\definecolor{lightapricot}{rgb}{0.99, 0.84, 0.69}	            
\definecolor{lightblue}{rgb}{0.68, 0.85, 0.9}	                
\definecolor{lightbrown}{rgb}{0.71, 0.4, 0.11}	                
\definecolor{lightcarminepink}{rgb}{0.9, 0.4, 0.38}	            
\definecolor{lightcoral}{rgb}{0.94, 0.5, 0.5}	                
\definecolor{lightcornflowerblue}{rgb}{0.6, 0.81, 0.93}	        
\definecolor{lightcyan}{rgb}{0.88, 1.0, 1.0}	                
\definecolor{lightfuchsiapink}{rgb}{0.98, 0.52, 0.9}	        
\definecolor{lightgoldenrodyellow}{rgb}{0.98, 0.98, 0.82}	    
\definecolor{lightgray}{rgb}{0.83, 0.83, 0.83}	                
\definecolor{lightgreen}{rgb}{0.56, 0.93, 0.56}	                
\definecolor{lightkhaki}{rgb}{0.94, 0.9, 0.55}	                
\definecolor{lightmauve}{rgb}{0.86, 0.82, 1.0}	                
\definecolor{lightpastelpurple}{rgb}{0.69, 0.61, 0.85}	        
\definecolor{lightpink}{rgb}{1.0, 0.71, 0.76}	                
\definecolor{lightsalmon}{rgb}{1.0, 0.63, 0.48}	                
\definecolor{lightsalmonpink}{rgb}{1.0, 0.6, 0.6}	            
\definecolor{lightseagreen}{rgb}{0.13, 0.7, 0.67}	            
\definecolor{lightskyblue}{rgb}{0.53, 0.81, 0.98}	            
\definecolor{lightslategray}{rgb}{0.47, 0.53, 0.6}	            
\definecolor{lighttaupe}{rgb}{0.7, 0.55, 0.43}	                
\definecolor{lightthulianpink}{rgb}{0.9, 0.56, 0.67}	        
\definecolor{lightyellow}{rgb}{1.0, 1.0, 0.88}	                
\definecolor{lilac}{rgb}{0.78, 0.64, 0.78}	                    
\definecolor{lime(colorwheel)}{rgb}{0.75, 1.0, 0.0}	            
\definecolor{lime(web)(x11green)}{rgb}{0.0, 1.0, 0.0}	        
\definecolor{limegreen}{rgb}{0.2, 0.8, 0.2}	                    
\definecolor{lincolngreen}{rgb}{0.11, 0.35, 0.02}	            
\definecolor{linen}{rgb}{0.98, 0.94, 0.9}	                    
\definecolor{liver}{rgb}{0.33, 0.29, 0.31}	                    
\definecolor{lust}{rgb}{0.9, 0.13, 0.13}	                    
\definecolor{macaroniandcheese}{rgb}{1.0, 0.74, 0.53}	        
\definecolor{magenta}{rgb}{1.0, 0.0, 1.0}	                    
\definecolor{magenta(dye)}{rgb}{0.79, 0.08, 0.48}	            
\definecolor{magenta(process)}{rgb}{1.0, 0.0, 0.56}	            
\definecolor{magicmint}{rgb}{0.67, 0.94, 0.82}	                
\definecolor{magnolia}{rgb}{0.97, 0.96, 1.0}	                
\definecolor{mahogany}{rgb}{0.75, 0.25, 0.0}	                
\definecolor{maize}{rgb}{0.98, 0.93, 0.37}	                    
\definecolor{majorelleblue}{rgb}{0.38, 0.31, 0.86}	            
\definecolor{malachite}{rgb}{0.04, 0.85, 0.32}	                
\definecolor{manatee}{rgb}{0.59, 0.6, 0.67}	                    
\definecolor{mangotango}{rgb}{1.0, 0.51, 0.26}	                
\definecolor{maroon(html/css)}{rgb}{0.5, 0.0, 0.0}	            
\definecolor{maroon(x11)}{rgb}{0.69, 0.19, 0.38}	            
\definecolor{mauve}{rgb}{0.88, 0.69, 1.0}	                    
\definecolor{mauvetaupe}{rgb}{0.57, 0.37, 0.43}	                
\definecolor{mauvelous}{rgb}{0.94, 0.6, 0.67}	                
\definecolor{mayablue}{rgb}{0.45, 0.76, 0.98}	                
\definecolor{meatbrown}{rgb}{0.9, 0.72, 0.23}	                
\definecolor{mediumaquamarine}{rgb}{0.4, 0.8, 0.67}	            
\definecolor{mediumblue}{rgb}{0.0, 0.0, 0.8}	                
\definecolor{mediumcandyapplered}{rgb}{0.89, 0.02, 0.17}	    
\definecolor{mediumcarmine}{rgb}{0.69, 0.25, 0.21}	            
\definecolor{mediumchampagne}{rgb}{0.95, 0.9, 0.67}	            
\definecolor{mediumelectricblue}{rgb}{0.01, 0.31, 0.59}	        
\definecolor{mediumjunglegreen}{rgb}{0.11, 0.21, 0.18}	        
\definecolor{mediumlavendermagenta}{rgb}{0.8, 0.6, 0.8}	        
\definecolor{mediumorchid}{rgb}{0.73, 0.33, 0.83}	            
\definecolor{mediumpersianblue}{rgb}{0.0, 0.4, 0.65}	        
\definecolor{mediumpurple}{rgb}{0.58, 0.44, 0.86}	            
\definecolor{mediumred-violet}{rgb}{0.73, 0.2, 0.52}	        
\definecolor{mediumseagreen}{rgb}{0.24, 0.7, 0.44}	            
\definecolor{mediumslateblue}{rgb}{0.48, 0.41, 0.93}	        
\definecolor{mediumspringbud}{rgb}{0.79, 0.86, 0.54}	        
\definecolor{mediumspringgreen}{rgb}{0.0, 0.98, 0.6}	        
\definecolor{mediumtaupe}{rgb}{0.4, 0.3, 0.28}	                
\definecolor{mediumtealblue}{rgb}{0.0, 0.33, 0.71}	            
\definecolor{mediumturquoise}{rgb}{0.28, 0.82, 0.8}	            
\definecolor{mediumviolet-red}{rgb}{0.78, 0.08, 0.52}	        
\definecolor{melon}{rgb}{0.99, 0.74, 0.71}	                    
\definecolor{midnightblue}{rgb}{0.1, 0.1, 0.44}	                
\definecolor{midnightgreen(eaglegreen)}{rgb}{0.0, 0.29, 0.33}	
\definecolor{mikadoyellow}{rgb}{1.0, 0.77, 0.05}	            
\definecolor{mint}{rgb}{0.24, 0.71, 0.54}	                    
\definecolor{mintcream}{rgb}{0.96, 1.0, 0.98}	                
\definecolor{mintgreen}{rgb}{0.6, 1.0, 0.6}	                    
\definecolor{mistyrose}{rgb}{1.0, 0.89, 0.88}	                
\definecolor{moccasin}{rgb}{0.98, 0.92, 0.84}	                
\definecolor{modebeige}{rgb}{0.59, 0.44, 0.09}	                
\definecolor{moonstoneblue}{rgb}{0.45, 0.66, 0.76}	            
\definecolor{mordantred19}{rgb}{0.68, 0.05, 0.0}	            
\definecolor{mossgreen}{rgb}{0.68, 0.87, 0.68}	                
\definecolor{mountainmeadow}{rgb}{0.19, 0.73, 0.56}	            
\definecolor{mountbattenpink}{rgb}{0.6, 0.48, 0.55}	            
\definecolor{mulberry}{rgb}{0.77, 0.29, 0.55}	                
\definecolor{mustard}{rgb}{1.0, 0.86, 0.35}	                    
\definecolor{myrtle}{rgb}{0.13, 0.26, 0.12}	                    
\definecolor{msugreen}{rgb}{0.09, 0.27, 0.23}	                
\definecolor{nadeshikopink}{rgb}{0.96, 0.68, 0.78}	            
\definecolor{napiergreen}{rgb}{0.16, 0.5, 0.0}	                
\definecolor{naplesyellow}{rgb}{0.98, 0.85, 0.37}	            
\definecolor{navajowhite}{rgb}{1.0, 0.87, 0.68}	                
\definecolor{navyblue}{rgb}{0.0, 0.0, 0.5}	                    
\definecolor{neoncarrot}{rgb}{1.0, 0.64, 0.26}	                
\definecolor{neonfuchsia}{rgb}{1.0, 0.25, 0.39}	                
\definecolor{neongreen}{rgb}{0.22, 0.88, 0.08}	                
\definecolor{non-photoblue}{rgb}{0.64, 0.87, 0.93}	            
\definecolor{oceanboatblue}{rgb}{0.0, 0.47, 0.75}	            
\definecolor{ochre}{rgb}{0.8, 0.47, 0.13}	                    
\definecolor{officegreen}{rgb}{0.0, 0.5, 0.0}	                
\definecolor{oldgold}{rgb}{0.81, 0.71, 0.23}	                
\definecolor{oldlace}{rgb}{0.99, 0.96, 0.9}	                    
\definecolor{oldlavender}{rgb}{0.47, 0.41, 0.47}	            
\definecolor{oldmauve}{rgb}{0.4, 0.19, 0.28}	                
\definecolor{oldrose}{rgb}{0.75, 0.5, 0.51}	                    
\definecolor{olive}{rgb}{0.5, 0.5, 0.0}	                        
\definecolor{olivedrab(web)(olivedrab3)}{rgb}{0.42, 0.56, 0.14}	
\definecolor{olivedrab7}{rgb}{0.24, 0.2, 0.12}	                
\definecolor{olivine}{rgb}{0.6, 0.73, 0.45}	                    
\definecolor{onyx}{rgb}{0.06, 0.06, 0.06}	                    
\definecolor{operamauve}{rgb}{0.72, 0.52, 0.65}	                
\definecolor{orange(colorwheel)}{rgb}{1.0, 0.5, 0.0}	        
\definecolor{orange(ryb)}{rgb}{0.98, 0.6, 0.01}	                
\definecolor{orange(webcolor)}{rgb}{1.0, 0.65, 0.0}	            
\definecolor{orangepeel}{rgb}{1.0, 0.62, 0.0}	                
\definecolor{orange-red}{rgb}{1.0, 0.27, 0.0}	                
\definecolor{orchid}{rgb}{0.85, 0.44, 0.84}	                    
\definecolor{otterbrown}{rgb}{0.4, 0.26, 0.13}	                
\definecolor{outerspace}{rgb}{0.25, 0.29, 0.3}	                
\definecolor{outrageousorange}{rgb}{1.0, 0.43, 0.29}	        
\definecolor{oxfordblue}{rgb}{0.0, 0.13, 0.28}	                
\definecolor{oucrimsonred}{rgb}{0.6, 0.0, 0.0}	                
\definecolor{pakistangreen}{rgb}{0.0, 0.4, 0.0}	                
\definecolor{palatinateblue}{rgb}{0.15, 0.23, 0.89}	            
\definecolor{palatinatepurple}{rgb}{0.41, 0.16, 0.38}	        
\definecolor{paleaqua}{rgb}{0.74, 0.83, 0.9}	                
\definecolor{paleblue}{rgb}{0.69, 0.93, 0.93}	                
\definecolor{palebrown}{rgb}{0.6, 0.46, 0.33}	                
\definecolor{palecarmine}{rgb}{0.69, 0.25, 0.21}	            
\definecolor{palecerulean}{rgb}{0.61, 0.77, 0.89}	            
\definecolor{palechestnut}{rgb}{0.87, 0.68, 0.69}	            
\definecolor{palecopper}{rgb}{0.85, 0.54, 0.4}	                
\definecolor{palecornflowerblue}{rgb}{0.67, 0.8, 0.94}	        
\definecolor{palegold}{rgb}{0.9, 0.75, 0.54}	                
\definecolor{palegoldenrod}{rgb}{0.93, 0.91, 0.67}	            
\definecolor{palegreen}{rgb}{0.6, 0.98, 0.6}	                
\definecolor{palemagenta}{rgb}{0.98, 0.52, 0.9}	                
\definecolor{palepink}{rgb}{0.98, 0.85, 0.87}	                
\definecolor{paleplum}{rgb}{0.8, 0.6, 0.8}	                    
\definecolor{palered-violet}{rgb}{0.86, 0.44, 0.58}	            
\definecolor{palerobineggblue}{rgb}{0.59, 0.87, 0.82}	        
\definecolor{palesilver}{rgb}{0.79, 0.75, 0.73}	                
\definecolor{palespringbud}{rgb}{0.93, 0.92, 0.74}	            
\definecolor{paletaupe}{rgb}{0.74, 0.6, 0.49}	                
\definecolor{paleviolet-red}{rgb}{0.86, 0.44, 0.58}	            
\definecolor{pansypurple}{rgb}{0.47, 0.09, 0.29}	            
\definecolor{papayawhip}{rgb}{1.0, 0.94, 0.84}	                
\definecolor{parisgreen}{rgb}{0.31, 0.78, 0.47}	                
\definecolor{pastelblue}{rgb}{0.68, 0.78, 0.81}	                
\definecolor{pastelbrown}{rgb}{0.51, 0.41, 0.33}	            
\definecolor{pastelgray}{rgb}{0.81, 0.81, 0.77}	                
\definecolor{pastelgreen}{rgb}{0.47, 0.87, 0.47}	            
\definecolor{pastelmagenta}{rgb}{0.96, 0.6, 0.76}	            
\definecolor{pastelorange}{rgb}{1.0, 0.7, 0.28}	                
\definecolor{pastelpink}{rgb}{1.0, 0.82, 0.86}	                
\definecolor{pastelpurple}{rgb}{0.7, 0.62, 0.71}	            
\definecolor{pastelred}{rgb}{1.0, 0.41, 0.38}	                
\definecolor{pastelviolet}{rgb}{0.8, 0.6, 0.79}	                
\definecolor{pastelyellow}{rgb}{0.99, 0.99, 0.59}	            
\definecolor{patriarch}{rgb}{0.5, 0.0, 0.5}	                    
\definecolor{payne\'sgrey}{rgb}{0.25, 0.25, 0.28}	            
\definecolor{peach}{rgb}{1.0, 0.9, 0.71}	                    
\definecolor{peach-orange}{rgb}{1.0, 0.8, 0.6}	                
\definecolor{peachpuff}{rgb}{1.0, 0.85, 0.73}	                
\definecolor{peach-yellow}{rgb}{0.98, 0.87, 0.68}	            
\definecolor{pear}{rgb}{0.82, 0.89, 0.19}	                    
\definecolor{pearl}{rgb}{0.94, 0.92, 0.84}	                    
\definecolor{peridot}{rgb}{0.9, 0.89, 0.0}	                    
\definecolor{periwinkle}{rgb}{0.8, 0.8, 1.0}	                
\definecolor{persianblue}{rgb}{0.11, 0.22, 0.73}	            
\definecolor{persiangreen}{rgb}{0.0, 0.65, 0.58}	            
\definecolor{persianindigo}{rgb}{0.2, 0.07, 0.48}	            
\definecolor{persianorange}{rgb}{0.85, 0.56, 0.35}	            
\definecolor{peru}{rgb}{0.8, 0.52, 0.25}	                    
\definecolor{persianpink}{rgb}{0.97, 0.5, 0.75}	                
\definecolor{persianplum}{rgb}{0.44, 0.11, 0.11}	            
\definecolor{persianred}{rgb}{0.8, 0.2, 0.2}	                
\definecolor{persianrose}{rgb}{1.0, 0.16, 0.64}	                
\definecolor{persimmon}{rgb}{0.93, 0.35, 0.0}	                
\definecolor{phlox}{rgb}{0.87, 0.0, 1.0}	                    
\definecolor{phthaloblue}{rgb}{0.0, 0.06, 0.54}	                
\definecolor{phthalogreen}{rgb}{0.07, 0.21, 0.14}	            
\definecolor{piggypink}{rgb}{0.99, 0.87, 0.9}	                
\definecolor{pinegreen}{rgb}{0.0, 0.47, 0.44}	                
\definecolor{pink}{rgb}{1.0, 0.75, 0.8}	                        
\definecolor{pink-orange}{rgb}{1.0, 0.6, 0.4}	                
\definecolor{pinkpearl}{rgb}{0.91, 0.67, 0.81}	                
\definecolor{pinksherbet}{rgb}{0.97, 0.56, 0.65}	            
\definecolor{pistachio}{rgb}{0.58, 0.77, 0.45}	                
\definecolor{platinum}{rgb}{0.9, 0.89, 0.89}	                
\definecolor{plum(traditional)}{rgb}{0.56, 0.27, 0.52}	        
\definecolor{plum(web)}{rgb}{0.8, 0.6, 0.8}	                    
\definecolor{portlandorange}{rgb}{1.0, 0.35, 0.21}	            
\definecolor{powderblue(web)}{rgb}{0.69, 0.88, 0.9}	            
\definecolor{princetonorange}{rgb}{1.0, 0.56, 0.0}	            
\definecolor{prune}{rgb}{0.44, 0.11, 0.11}	                    
\definecolor{prussianblue}{rgb}{0.0, 0.19, 0.33}	            
\definecolor{psychedelicpurple}{rgb}{0.87, 0.0, 1.0}	        
\definecolor{puce}{rgb}{0.8, 0.53, 0.6}	                        
\definecolor{pumpkin}{rgb}{1.0, 0.46, 0.09}	                    
\definecolor{purple(html/css)}{rgb}{0.5, 0.0, 0.5}	            
\definecolor{purple(munsell)}{rgb}{0.62, 0.0, 0.77}	            
\definecolor{purple(x11)}{rgb}{0.63, 0.36, 0.94}	            
\definecolor{purpleheart}{rgb}{0.41, 0.21, 0.61}	            
\definecolor{purplemountainmajesty}{rgb}{0.59, 0.47, 0.71}	    
\definecolor{purplepizzazz}{rgb}{1.0, 0.31, 0.85}	            
\definecolor{purpletaupe}{rgb}{0.31, 0.25, 0.3}	                
\definecolor{radicalred}{rgb}{1.0, 0.21, 0.37}	                
\definecolor{raspberry}{rgb}{0.89, 0.04, 0.36}	                
\definecolor{raspberryglace}{rgb}{0.57, 0.37, 0.43}	            
\definecolor{raspberrypink}{rgb}{0.89, 0.31, 0.61}	            
\definecolor{raspberryrose}{rgb}{0.7, 0.27, 0.42}	            
\definecolor{rawumber}{rgb}{0.51, 0.4, 0.27}	                
\definecolor{razzledazzlerose}{rgb}{1.0, 0.2, 0.8}	            
\definecolor{razzmatazz}{rgb}{0.89, 0.15, 0.42}	                
\definecolor{red}{rgb}{1.0, 0.0, 0.0}	                        
\definecolor{red(munsell)}{rgb}{0.95, 0.0, 0.24}	            
\definecolor{red(ncs)}{rgb}{0.77, 0.01, 0.2}	                
\definecolor{red(pigment)}{rgb}{0.93, 0.11, 0.14}	            
\definecolor{red(ryb)}{rgb}{1.0, 0.15, 0.07}	                
\definecolor{red-brown}{rgb}{0.65, 0.16, 0.16}	                
\definecolor{red-violet}{rgb}{0.78, 0.08, 0.52}	                
\definecolor{redwood}{rgb}{0.67, 0.31, 0.32}	                
\definecolor{regalia}{rgb}{0.32, 0.18, 0.5}	                    
\definecolor{richblack}{rgb}{0.0, 0.25, 0.25}	                
\definecolor{richbrilliantlavender}{rgb}{0.95, 0.65, 1.0}	    
\definecolor{richcarmine}{rgb}{0.84, 0.0, 0.25}	                
\definecolor{richelectricblue}{rgb}{0.03, 0.57, 0.82}	        
\definecolor{richlavender}{rgb}{0.67, 0.38, 0.8}	            
\definecolor{richlilac}{rgb}{0.71, 0.4, 0.82}	                
\definecolor{richmaroon}{rgb}{0.69, 0.19, 0.38}	                
\definecolor{riflegreen}{rgb}{0.25, 0.28, 0.2}	                
\definecolor{robineggblue}{rgb}{0.0, 0.8, 0.8}	                
\definecolor{rose}{rgb}{1.0, 0.0, 0.5}	                        
\definecolor{rosebonbon}{rgb}{0.98, 0.26, 0.62}	                
\definecolor{roseebony}{rgb}{0.4, 0.3, 0.28}	                
\definecolor{rosegold}{rgb}{0.72, 0.43, 0.47}	                
\definecolor{rosemadder}{rgb}{0.89, 0.15, 0.21}	                
\definecolor{rosepink}{rgb}{1.0, 0.4, 0.8}	                    
\definecolor{rosequartz}{rgb}{0.67, 0.6, 0.66}	                
\definecolor{rosetaupe}{rgb}{0.56, 0.36, 0.36}	                
\definecolor{rosevale}{rgb}{0.67, 0.31, 0.32}	                
\definecolor{rosewood}{rgb}{0.4, 0.0, 0.04}	                    
\definecolor{rossocorsa}{rgb}{0.83, 0.0, 0.0}	                
\definecolor{rosybrown}{rgb}{0.74, 0.56, 0.56}	                
\definecolor{royalazure}{rgb}{0.0, 0.22, 0.66}	                
\definecolor{royalblue(traditional)}{rgb}{0.0, 0.14, 0.4}	    
\definecolor{royalblue(web)}{rgb}{0.25, 0.41, 0.88}	            
\definecolor{royalfuchsia}{rgb}{0.79, 0.17, 0.57}	            
\definecolor{royalpurple}{rgb}{0.47, 0.32, 0.66}	            
\definecolor{ruby}{rgb}{0.88, 0.07, 0.37}	                    
\definecolor{ruddy}{rgb}{1.0, 0.0, 0.16}	                    
\definecolor{ruddybrown}{rgb}{0.73, 0.4, 0.16}	                
\definecolor{ruddypink}{rgb}{0.88, 0.56, 0.59}	                
\definecolor{rufous}{rgb}{0.66, 0.11, 0.03}	                    
\definecolor{russet}{rgb}{0.5, 0.27, 0.11}	                    
\definecolor{rust}{rgb}{0.72, 0.25, 0.05}	                    
\definecolor{sacramentostategreen}{rgb}{0.0, 0.34, 0.25}	    
\definecolor{saddlebrown}{rgb}{0.55, 0.27, 0.07}	            
\definecolor{safetyorange(blazeorange)}{rgb}{1.0, 0.4, 0.0}	    
\definecolor{saffron}{rgb}{0.96, 0.77, 0.19}	                
\definecolor{st.patrick\'sblue}{rgb}{0.14, 0.16, 0.48}	        
\definecolor{salmon}{rgb}{1.0, 0.55, 0.41}	                    
\definecolor{salmonpink}{rgb}{1.0, 0.57, 0.64}	                
\definecolor{sand}{rgb}{0.76, 0.7, 0.5}	                        
\definecolor{sanddune}{rgb}{0.59, 0.44, 0.09}	                
\definecolor{sandstorm}{rgb}{0.93, 0.84, 0.25}	                
\definecolor{sandybrown}{rgb}{0.96, 0.64, 0.38}	                
\definecolor{sandytaupe}{rgb}{0.59, 0.44, 0.09}	                
\definecolor{sangria}{rgb}{0.57, 0.0, 0.04}	                    
\definecolor{sapgreen}{rgb}{0.31, 0.49, 0.16}	                
\definecolor{sapphire}{rgb}{0.03, 0.15, 0.4}	                
\definecolor{satinsheengold}{rgb}{0.8, 0.63, 0.21}	            
\definecolor{scarlet}{rgb}{1.0, 0.13, 0.0}	                    
\definecolor{schoolbusyellow}{rgb}{1.0, 0.85, 0.0}	            
\definecolor{screamin\'green}{rgb}{0.46, 1.0, 0.44}	            
\definecolor{seagreen}{rgb}{0.18, 0.55, 0.34}	                
\definecolor{sealbrown}{rgb}{0.2, 0.08, 0.08}	                
\definecolor{seashell}{rgb}{1.0, 0.96, 0.93}	                
\definecolor{selectiveyellow}{rgb}{1.0, 0.73, 0.0}	            
\definecolor{sepia}{rgb}{0.44, 0.26, 0.08}	                    
\definecolor{shadow}{rgb}{0.54, 0.47, 0.36}	                    
\definecolor{shamrockgreen}{rgb}{0.0, 0.62, 0.38}	            
\definecolor{shockingpink}{rgb}{0.99, 0.06, 0.75}	            
\definecolor{sienna}{rgb}{0.53, 0.18, 0.09}	                    
\definecolor{silver}{rgb}{0.75, 0.75, 0.75}	                    
\definecolor{sinopia}{rgb}{0.8, 0.25, 0.04}	                    
\definecolor{skobeloff}{rgb}{0.0, 0.48, 0.45}	                
\definecolor{skyblue}{rgb}{0.53, 0.81, 0.92}	                
\definecolor{skymagenta}{rgb}{0.81, 0.44, 0.69}	                
\definecolor{slateblue}{rgb}{0.42, 0.35, 0.8}	                
\definecolor{slategray}{rgb}{0.44, 0.5, 0.56}	                
\definecolor{smalt(darkpowderblue)}{rgb}{0.0, 0.2, 0.6}	        
\definecolor{smokeytopaz}{rgb}{0.58, 0.25, 0.03}	            
\definecolor{smokyblack}{rgb}{0.06, 0.05, 0.03}	                
\definecolor{snow}{rgb}{1.0, 0.98, 0.98}	                    
\definecolor{spirodiscoball}{rgb}{0.06, 0.75, 0.99}	            
\definecolor{splashedwhite}{rgb}{1.0, 0.99, 1.0}	            
\definecolor{springbud}{rgb}{0.65, 0.99, 0.0}	                
\definecolor{springgreen}{rgb}{0.0, 1.0, 0.5}	                
\definecolor{steelblue}{rgb}{0.27, 0.51, 0.71}	                
\definecolor{stildegrainyellow}{rgb}{0.98, 0.85, 0.37}	        
\definecolor{straw}{rgb}{0.89, 0.85, 0.44}	                    
\definecolor{sunglow}{rgb}{1.0, 0.8, 0.2}	                    
\definecolor{sunset}{rgb}{0.98, 0.84, 0.65}	                    
\definecolor{tan}{rgb}{0.82, 0.71, 0.55}	                    
\definecolor{tangelo}{rgb}{0.98, 0.3, 0.0}	                    
\definecolor{tangerine}{rgb}{0.95, 0.52, 0.0}	                
\definecolor{tangerineyellow}{rgb}{1.0, 0.8, 0.0}	            
\definecolor{taupe}{rgb}{0.28, 0.24, 0.2}	                    
\definecolor{taupegray}{rgb}{0.55, 0.52, 0.54}	                
\definecolor{teagreen}{rgb}{0.82, 0.94, 0.75}	                
\definecolor{tearose(orange)}{rgb}{0.97, 0.51, 0.47}	        
\definecolor{tearose(rose)}{rgb}{0.96, 0.76, 0.76}	            
\definecolor{teal}{rgb}{0.0, 0.5, 0.5}	                        
\definecolor{tealblue}{rgb}{0.21, 0.46, 0.53}	                
\definecolor{tealgreen}{rgb}{0.0, 0.51, 0.5}	                
\definecolor{tenne-tawny}{rgb}{0.8, 0.34, 0.0}	                
\definecolor{terracotta}{rgb}{0.89, 0.45, 0.36}	                
\definecolor{thistle}{rgb}{0.85, 0.75, 0.85}	                
\definecolor{thulianpink}{rgb}{0.87, 0.44, 0.63}	            
\definecolor{ticklemepink}{rgb}{0.99, 0.54, 0.67}	            
\definecolor{tiffanyblue}{rgb}{0.04, 0.73, 0.71}	            
\definecolor{tiger\'seye}{rgb}{0.88, 0.55, 0.24}	            
\definecolor{timberwolf}{rgb}{0.86, 0.84, 0.82}	                
\definecolor{titaniumyellow}{rgb}{0.93, 0.9, 0.0}	            
\definecolor{tomato}{rgb}{1.0, 0.39, 0.28}	                    
\definecolor{toolbox}{rgb}{0.45, 0.42, 0.75}	                
\definecolor{tractorred}{rgb}{0.99, 0.05, 0.21}	                
\definecolor{trolleygrey}{rgb}{0.5, 0.5, 0.5}	                
\definecolor{tropicalrainforest}{rgb}{0.0, 0.46, 0.37}	        
\definecolor{trueblue}{rgb}{0.0, 0.45, 0.81}	                
\definecolor{tuftsblue}{rgb}{0.28, 0.57, 0.81}	                
\definecolor{tumbleweed}{rgb}{0.87, 0.67, 0.53}	                
\definecolor{turkishrose}{rgb}{0.71, 0.45, 0.51}	            
\definecolor{turquoise}{rgb}{0.19, 0.84, 0.78}	                
\definecolor{turquoiseblue}{rgb}{0.0, 1.0, 0.94}	            
\definecolor{turquoisegreen}{rgb}{0.63, 0.84, 0.71}	            
\definecolor{tuscanred}{rgb}{0.51, 0.21, 0.21}	                
\definecolor{twilightlavender}{rgb}{0.54, 0.29, 0.42}	        
\definecolor{tyrianpurple}{rgb}{0.4, 0.01, 0.24}	            
\definecolor{uablue}{rgb}{0.0, 0.2, 0.67}	                    
\definecolor{uared}{rgb}{0.85, 0.0, 0.3}	                    
\definecolor{ube}{rgb}{0.53, 0.47, 0.76}	                    
\definecolor{uclablue}{rgb}{0.33, 0.41, 0.58}	                
\definecolor{uclagold}{rgb}{1.0, 0.7, 0.0}	                    
\definecolor{ufogreen}{rgb}{0.24, 0.82, 0.44}	                
\definecolor{ultramarine}{rgb}{0.07, 0.04, 0.56}	            
\definecolor{ultramarineblue}{rgb}{0.25, 0.4, 0.96}	            
\definecolor{ultrapink}{rgb}{1.0, 0.44, 1.0}	                
\definecolor{umber}{rgb}{0.39, 0.32, 0.28}	                    
\definecolor{unitednationsblue}{rgb}{0.36, 0.57, 0.9}	        
\definecolor{unmellowyellow}{rgb}{1.0, 1.0, 0.4}	            
\definecolor{upforestgreen}{rgb}{0.0, 0.27, 0.13}	            
\definecolor{upmaroon}{rgb}{0.48, 0.07, 0.07}	                
\definecolor{upsdellred}{rgb}{0.68, 0.09, 0.13}	                
\definecolor{urobilin}{rgb}{0.88, 0.68, 0.13}	                
\definecolor{usccardinal}{rgb}{0.6, 0.0, 0.0}	                
\definecolor{uscgold}{rgb}{1.0, 0.8, 0.0}	                    
\definecolor{utahcrimson}{rgb}{0.83, 0.0, 0.25}	                
\definecolor{vanilla}{rgb}{0.95, 0.9, 0.67}	                    
\definecolor{vegasgold}{rgb}{0.77, 0.7, 0.35}	                
\definecolor{venetianred}{rgb}{0.78, 0.03, 0.08}	            
\definecolor{verdigris}{rgb}{0.26, 0.7, 0.68}	                
\definecolor{vermilion}{rgb}{0.89, 0.26, 0.2}	                
\definecolor{veronica}{rgb}{0.63, 0.36, 0.94}	                
\definecolor{violet}{rgb}{0.56, 0.0, 1.0}	                    
\definecolor{violet(colorwheel)}{rgb}{0.5, 0.0, 1.0}	        
\definecolor{violet(ryb)}{rgb}{0.53, 0.0, 0.69}	                
\definecolor{violet(web)}{rgb}{0.93, 0.51, 0.93}	            
\definecolor{viridian}{rgb}{0.25, 0.51, 0.43}	                
\definecolor{vividauburn}{rgb}{0.58, 0.15, 0.14}	            
\definecolor{vividburgundy}{rgb}{0.62, 0.11, 0.21}	            
\definecolor{vividcerise}{rgb}{0.85, 0.11, 0.51}	            
\definecolor{vividtangerine}{rgb}{1.0, 0.63, 0.54}	            
\definecolor{vividviolet}{rgb}{0.62, 0.0, 1.0}	                
\definecolor{warmblack}{rgb}{0.0, 0.26, 0.26}	                
\definecolor{wenge}{rgb}{0.39, 0.33, 0.32}	                    
\definecolor{wheat}{rgb}{0.96, 0.87, 0.7}	                    
\definecolor{white}{rgb}{1.0, 1.0, 1.0}	                        
\definecolor{whitesmoke}{rgb}{0.96, 0.96, 0.96}	                
\definecolor{wildblueyonder}{rgb}{0.64, 0.68, 0.82}	            
\definecolor{wildstrawberry}{rgb}{1.0, 0.26, 0.64}	            
\definecolor{wildwatermelon}{rgb}{0.99, 0.42, 0.52}	            
\definecolor{wisteria}{rgb}{0.79, 0.63, 0.86}	                
\definecolor{xanadu}{rgb}{0.45, 0.53, 0.47}	                    
\definecolor{yaleblue}{rgb}{0.06, 0.3, 0.57}	                
\definecolor{yellow}{rgb}{1.0, 1.0, 0.0}	                    
\definecolor{yellow(munsell)}{rgb}{0.94, 0.8, 0.0}	            
\definecolor{yellow(ncs)}{rgb}{1.0, 0.83, 0.0}	                
\definecolor{yellow(process)}{rgb}{1.0, 0.94, 0.0}	            
\definecolor{yellow(ryb)}{rgb}{1.0, 1.0, 0.2}	                
\definecolor{yellow-green}{rgb}{0.6, 0.8, 0.2}	                
\definecolor{zaffre}{rgb}{0.0, 0.08, 0.66}	                    
\definecolor{zinnwalditebrown}{rgb}{0.17, 0.09, 0.03}	        
\definecolor{solarized-yellow}{HTML}	{B58900}
\definecolor{solarized-orange}{HTML}	{CB4B16}
\definecolor{solarized-red}{HTML}		{DC322F}
\definecolor{solarized-magenta}{HTML}	{D33682}
\definecolor{solarized-violet}{HTML}	{6C71C4}
\definecolor{solarized-blue}{HTML}		{268BD2}
\definecolor{solarized-cyan}{HTML}		{2AA198}
\definecolor{solarized-green}{HTML}		{859900}
\definecolor{solarized-base03}{HTML}	{002B36}
\definecolor{solarized-base02}{HTML}	{073642}
\definecolor{solarized-base01}{HTML}	{586E75}
\definecolor{solarized-base00}{HTML}	{657B83}
\definecolor{solarized-base0}{HTML}		{839496}
\definecolor{solarized-base1}{HTML}		{93A1A1}
\definecolor{solarized-base2}{HTML}		{EEE8D5}
\definecolor{solarized-base3}{HTML}		{FDF6E3}
\newtheorem{theorem}{Theorem}[section]
\newtheorem{lemma}[theorem]{Lemma}
\theoremstyle{definition}
\newtheorem{definition}[theorem]{Definition}
\newcommand{\braces}[1]{\ensuremath{\left\lbrace#1 \right\rbrace}}
\newcommand{\clusterPair}[3][]{\ensuremath{\left(#2,#3\right)^{#1}}}
\newcommand{\clusterSequence}[3]{\ensuremath{#1(#2,#3)}}
\newcommand{\clusterSet}[1]{\ensuremath{\clusterSetSymbol^{#1}}}
\newcommand{\clusterSetOfCluster}[3][]{\ensuremath{\clusterSet{#2}_{#1}\hspace{-2pt}\left(#3\right)}}
\newcommand{\clusterSetSymbol}{\mathcal{S}}
\newcommand{\clusterWalk}[4][]{\ensuremath{#2\hspace{-2.10pt}\left(#3,#4\right)^{#1}}}
\newcommand{\code}[1]{\textsf{#1}}
\newcommand{\conditionalSet}[2]{\ensuremath{ \left\lbrace #1\ \mid  #2\right\rbrace  }}
\newcommand{\disk}[1]{\ensuremath{\diskSymbol^{#1}}}
\newcommand{\diskSymbol}{\ensuremath{\mathcal{D}}}
\newcommand{\falling}[1]{\ensuremath{\underline{#1}}}
\newcommand{\graded}[1]{{\bf #1}}
\newcommand{\List}[3]{\ensuremath{\left(\listKernel{#1}{#2}{#3}\right)}}
\newcommand{\listKernel}[3]{\ensuremath{#1_{{#2}},\ldots,#1_{{#3}}}}
\newcommand{\naturals}{\ensuremath{\mathbb{N}}}
\newcommand{\nSet}[1]{\ensuremath{[#1]}}
\newcommand{\nSetOpen}[1]{\ensuremath{[#1)}}
\newcommand{\nOpenSet}[1]{\ensuremath{[#1)}}
\newcommand{\numberOfStates}[1]{\ensuremath{|#1|}}
\newcommand{\numberOfTransfers}[1]{\ensuremath{L\left(#1\right)}}
\newcommand{\nZeroSet}[1]{\ensuremath{\nSet{#1}_0}}
\newcommand{\nZeroSetOpen}[1]{\ensuremath{\nSetOpen{#1}_0}}
\newcommand{\parentheses}[1]{\ensuremath{\left(#1\right)}}
\newcommand{\Path}{\ensuremath{\pi}}
\newcommand{\posts}{\ensuremath{\mathfrak{P}}}
\newcommand{\pReflect}[4][]{\ensuremath{\reflectiveMap[#1]{#3}{#4}\hspace{-2pt}\left(#2\right)}}
\newcommand{\pStateGraph}[2][]{\ensuremath{\stateGraphSymbol}^{#2}_{#1}}
\newcommand{\pTranslate}[2]{\ensuremath{\pTranslativeMapArg{#2}{#1}}}
\newcommand{\pTranslativeMap}[1]{\ensuremath{\pTranslativeMapSymbol_{#1}}}
\newcommand{\pTranslativeMapArg}[2]{\ensuremath{\pTranslativeMap{#1}\parentheses{#2}}}
\newcommand{\pTranslativeMapSymbol}{\ensuremath{T}}
\newcommand{\reflect}[4][]{\ensuremath{\reflectiveMap[#1]{#2}{#3}\hspace{-2pt}\left(#4\right)}}
\newcommand{\reflectiveMap}[3][]{\ensuremath{\reflectiveMapNotation^{#1}_{#2\mid #3}}}
\newcommand{\reflectiveMapNotation}{\ensuremath{R}}
\newcommand{\sequenceDecomposition}[2]{\ensuremath{[#1]^{#2}}}
\newcommand{\stateGraphSymbol}{\ensuremath{H}}
\newcommand{\transferLength}[1]{\ensuremath{\numberOfTransfers{#1}}}
\newcommand{\tgec}[1]{\ensuremath{\transitionGraphEdgeCountSymbol\parentheses{#1}}}
\newcommand{\transitionGraphEdgeCountSymbol}{\eta}
\title{On the Parallel Tower of Hanoi Puzzle: Acyclicity and a Conditional Triangle Inequality}
\author{Andrey Rukhin}
\begin{document}
	\maketitle
\begin{abstract}
A parallel variant of the Tower of Hanoi Puzzle is described herein. Within this parallel context, two theorems on minimal walks in the state space of configurations, along with their constructive proofs, are provided. These proofs are used to describe a {\sl denoising method}: a method for identifying and eliminating sub-optimal transfers within an arbitrary, valid sequence of disk configurations (as per the rules of the Puzzle). We discuss potential applications of this method to hierarchical reinforcement learning.
\end{abstract}

\section{Introduction}
\subsection{Problem Description}
	The Tower of Hanoi Puzzle consists of $n \ (n\in \naturals_0)$ annular disks (no two disks of equal radius) and $p$ posts $(p\geq
2)$ attached to a fixed base.  The puzzle begins with all of the disks stacked on a single post with no larger disk being
stacked atop a smaller disk.  The goal of the puzzle is to transfer the initial ``tower'' of disks to another post
while adhering to the following rules:
	\begin{enumerate}[i.]
	        \item one disk is transferred from the top of one stack to the top of another (possibly empty) stack at each stage of the puzzle, and
	        \item a larger disk cannot be transferred to a post occupied by a smaller disk.
	\end{enumerate}
	See \cite{HinzBook} for a complete summary and history of the Puzzle.
	One prevailing question that has been studied over the past decades is summarized as follows: assuming feasibility, how does one perform the task in the fewest number of disk transfers?   Recent progress has resolved this question in the case of $p=4$ (\cite{Bousch}),  and this approach has been applied in \cite{Grosu} to improve the asymptotic bounds for the cases where $p\geq 5$).   
	
	This article considers the same question in a generalization of the classic Puzzle where there are  one or more ``towers" of disks. We will introduce $t$ towers $(t\in \naturals)$ on $p$ posts where  $p \geq t+2$.   Each tower of disks is assigned its own color,  and each tower has $n$ disks of sizes $1,\ldots,n$ where $i<j$ implies that  disk $i$ is larger than  disk $j$. Similar to above, we require that no disk (of any color) is atop any smaller or equal-sized disk (of any color). We will define herein the rules of the parallel puzzle in an analogous manner: 
	\begin{enumerate}[i.]
	        \item one or more one disks (of any color) are transferred---each from the top its own stack---to the top of one or more (possibly empty) stacks at each stage of the puzzle, where no two transfers share the same destination stack, and  
	        \item  a larger disk cannot be transferred to a post occupied by a smaller or equal-sized disk.
	\end{enumerate}
	
	  For other parallel variants of the Tower of Hanoi Puzzle (on a single tower of disks, but where more than one disk can be transferred at each stage) see \cite{WU1992241}  and \cite{LU19953}.   
	  
	  Within  our parallel context, we take up the question of finding walks of minimal length within our parallel context, and we establish two results---with constructive proofs---that prescribe necessary properties that minimal-length walks connecting configurations must possess (the single-tower version of these results were proved in \cite{Rukhin}); these constructive proofs are used to construct a {\sl denoising} algorithm for eliminating extraneous/sub-optimal disk transfers from an arbitrary sequence of moves within the parallel puzzle (e.g., a rollout within a reinforcement learning procedure within the context of the Tower of Hanoi Puzzle;  e.g, see \cite{langleyLTS}, \cite{neurIPS},   \cite{BackwardForward}). In the context of learning the minimal paths from an arbitrary configuration to a prescribed destination configuration, the denoising method may accelerate learning by being applied after one or more rollouts are performed: the quality  values of an action obtained after any rollout may be updated and improved with the quality values of actions obtained from the denoised rollout. See Figure \ref{figure::ToHQLearning}.
	  
	   	\begin{figure}[h!]
		\centering	
		\scalebox{.2}{\includegraphics{./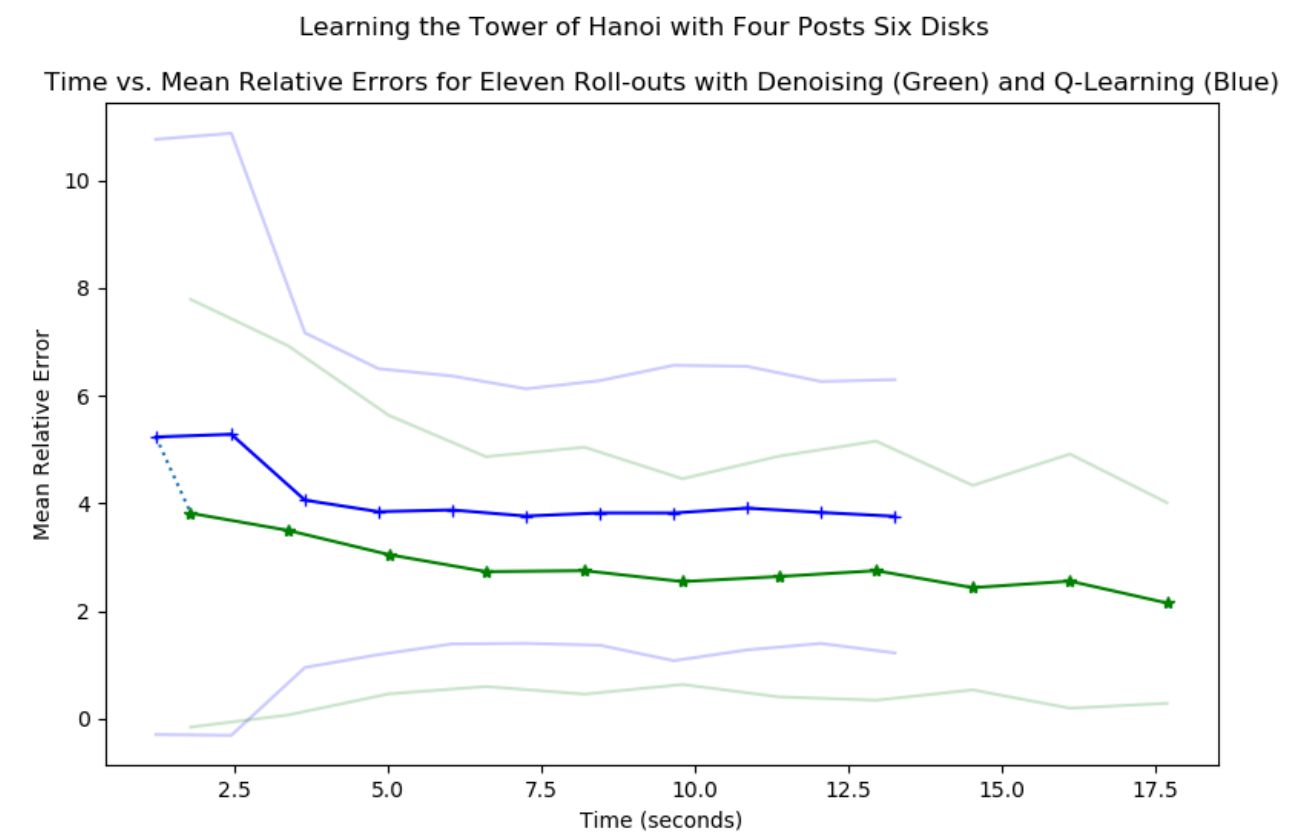}}
		\caption{Comparison of Time vs. Mean Squared Error of a two learning agents (in the single-tower Puzzle on four posts and six disks) after individual rollouts: one with the denoising method incorporated in the learning episodes (green), and one without (blue).  The mean-squared errors at time $\approx 12.7$ seconds yield the following statistics: the two-sample $T$ statistic for comparing the mean squared errors of the learning agents with and without denoising ($1,000$ samples each) is $9.13$ with $\nu = 1992.4$ degrees of freedom. \label{figure::ToHQLearning}}
	\end{figure}
	
		\subsection{Article Summary}
	 In Section \ref{section::setup}, we encapsulate the Parallel Tower of Hanoi Puzzle in a  finite metric space on graphs, words, and sets of words (\emph{clusters}).

	 In Section \ref{section::results}, we establish two results that prescribe necessary properties that minimal-length walks connecting configurations must possess.

	  The proofs of these results are used to describe the {\sl denoising} method (summarized in Section \ref{section::denoising}).

\subsection{Notation}

The domain of all variables is the set of  non-negative integers.  Throughout, we use $n$, $p$, and $t$  to denote the number of disks, posts, and towers, respectively.

For any fixed positive integer $m$, we let \nSet{m} denote the set of the first $m$ positive integers.  We 
write \nZeroSet{m} to denote the set $\nSet{m} \cup \{0\}.$ Furthermore, we'll write $\nSetOpen{m}$ and $\nZeroSetOpen{m}$ to denote the sets $\nSet{m-1}$ and $\nZeroSet{m-1}$, respectively. 
We will define the set of $p$ posts to be \posts. 

Assume $\List{X}{i}{i+k}$ is a
subsequence of $k+1$ contiguous elements within the sequence $\List{X}{1}{m}$ (e.g., a subsequence of contiguous symbols in a string, a
subsequence of  adjacent disk configurations). Should the case arise where $k < 0$,  we will assume that the
subsequence \List{X}{i}{i+k} is the empty
sequence (e.g, the empty string $\epsilon$, the empty subsequence).

For a fixed $t$, we will write $\graded{1}^t =\List{1}{}{} \in \naturals^{t}$, and for $j\in \naturals_0$, we will write $j\cdot\graded{1}^t = \List{j}{}{} \in \naturals_0^t$. Context permitting, we will omit the superscript.

Assume that $\graded{i}, \graded{j} \in \naturals_0^t$ where $\graded{i} = \parentheses{i_u}_{u\in \nSet{t}}$ and  $\graded{j} = \parentheses{j_u}_{u\in \nSet{t}}$. We will say that $\graded{i} \preceq \graded{j}$ (equivalently, $\graded{j} \succeq \graded{i}$) if and only if $i_u \leq j_u$ for each $u\in \nSet{t}$. We will also write $\graded{i} \prec \graded{j}$  (equivalently, $\graded{j} \succ \graded{i}$)  if the inequality $i_u < j_u$ (equivalently, $i_u>j_u$) holds for at least one  index $u$. 

We assume that each of the $t$ towers is assigned its own {\sl color}. We will write $\disk{u,j}$ to denote the $j$-th largest disk of color $u$ ($j\in \nSet{n}, u \in \nSet{t}$).  We will represent a configuration of $nt$ disks as $t\times n$ matrix over the set of posts $\posts$:  the matrix 
\[
\begin{bmatrix}
    a_{1,1}  & \dots  & a_{1,n} \\
    \vdots  & \ddots & \vdots \\
    a_{t,1}  & \dots  &a_{t,n}
\end{bmatrix}
\] encodes a configuration of $t$ towers, each with $n$ disks,  on $p$ posts where the disk $\disk{u,j}$ occupies the post $a_{u,j}$; the largest disks of each color occupy the posts in the in the first (left-most) column, and the post configurations of the smaller disks are listed by column  in decreasing size from left to  right.

\section{The Parallel Tower of Hanoi Problem}\label{section::setup}

In this section, we will establish the mathematical machinery to analyze the state space of disk configurations in a hierarchical manner: we will partition the state space of disk configurations into sets (called {\sl clusters}) that are defined by the arrangement of a prescribed subset of the largest disks.  Afterwards, as was originally done in \cite{Scorer}, we will  represent the parallel state space of configurations in the context of graphs.  We will define the formalism to  analyze walks within these graphs, as well as {\sl cluster walks}: for a pair $(\alpha, \beta)$ of arbitrary configurations within the state space, this machinery will allow us to decide in which subsets of configurations (i.e., the corresponding subsets of vertices in the state graph) that minimal walks connecting $\alpha$ and $\beta$ must be contained. 

\begin{definition}[Parallel Clusters/Cluster Sets]\label{definition::pcluster}
   Assume that $\graded{g} = \List{g}{1}{t} \in \nZeroSet{n}^{t}$.
	\begin{enumerate}
		\item A {\sl cluster (of grading $\graded{g}$)} is a set of disk configurations in which, for each $u \in \nSet{t}$,  the $g_u$  disks
$\disk{u,1},\ldots,\disk{u,g_u}$ are in a fixed configuration. 


We  denote the set of all clusters of \graded{g} on $p$ posts as
$\clusterSet{t, \graded{g}}_p$. 
We  also define the cluster of grading $0\cdot \graded{1}$ to be the set of all disk configurations, and we  denote this set as $\clusterSet{t,n}_{p}$.
		\item Assume the cluster $A \in \clusterSet{t, \graded{g}}_p$. For all $\graded{h} \succeq \graded{g}$, we define the set of clusters
		\[
		 \clusterSetOfCluster[p]{t, \graded{h}}{A} =
\conditionalSet{B}{ B \in \clusterSet{t, \graded{h}}_p,   B \subseteq A}.
		\]  
	\end{enumerate}   
	
	Context permitting, we will omit  the superscript $t$ and subscript $p$ whenever possible.


\end{definition}

%
For $j\in \nSet{n}$, there are $p$ choices to place $\disk{1,j}$, $p-1$ choices to place $\disk{2,j}, \ldots, p-t+1$ choices to place $\disk{t,j}$.  Thus, there are $p^{\falling{t}}$ choices to place the $t$ disks of  index $j$.  Consequently, induction yields  $\parentheses{p^{\falling{t}}}^{j}$ clusters of grading $j\cdot \graded{1}$.

\subsection{EREW-Adjacency (Exclusive Read, Exclusive Write Adjacency) }

In the classical puzzle, two configurations were adjacent if and only if they differed by the valid transfer of a single disk.  We now extend this definition into the parallel context as follows.

\begin{definition}[Exclusive Read/Write Adjacency]

Let the configuration $\alpha = \parentheses{a_{u,y}} \in \clusterSet{t,n}_p$.

A configuration $\beta = \parentheses{a_{u,y}} \in \clusterSet{t,n}_p$ is {\sl EREW-adjacent to $\alpha$} if and only if

the following conditions are satisfied:
\begin{enumerate}\label{definition::adjacency}
	\item\label{nontriviality} {\sl  Non-triviality}:  there exists $u\in \nSet{t}$ and $j_u\in \nSet{n}$ where $a_{u,j_u} \neq b_{u,j_u}$ (at least one disk has transferred), and

	\item\label{stackReadWrite} {\sl  Stack Read/Write}:  if $a_{u,j_u} \neq b_{u,j_u}$, then,  for $v\in \nSet{t}$ and $y> j_u$, the inequalities $a_{v,y} \neq a_{u,j_u}$ and $b_{v,y} \neq b_{u,j_u}$ hold.

\end{enumerate}

\end{definition}

One consequence of this definition is the {\sl exclusive read/write} property for the parallel Puzzle: if $a_{u,j_u} \neq b_{u,j_u}$ and $a_{v,j_v} \neq b_{v,j_v}$ where $u\neq v$, then $a_{u,j_u} \neq a_{v,j_v}$ and  $b_{u,j_u} \neq b_{v,j_v}$. 

Equipped with this definition of adjacency, we will define the {\sl state graph} of the parallel Puzzle as was done in \cite{Scorer} for the single-tower case (see Figure \ref{figure::stateGraphEx}).

\begin{definition}\label{definition::pStateGraph}
	 We  define the {\sl state graph} of the parallel Puzzle to be the pair $\parentheses{\clusterSet{t,n}_p, E}$ where \[
	E = \braces{ \braces{\alpha,\beta} \mid \alpha \textmd{ and } \beta \textmd{ are {\sl EREW}-adjacent configurations}  },
\] and we will denote this graph as $\pStateGraph[p]{t,n}$.
\end{definition}

	\begin{figure}[h!]
		\centering	
		\tdplotsetmaincoords{75}{72}

\resizebox{2in}{!}{\begin{tikzpicture}[tdplot_main_coords]

		\node (00) at (-1.61043535006171, -0.929785282870616, -0.657457478565264) {};
		\node (01) at (-0.777102016728374, -1.02601032773555, -0.113126424613447) {};
		\node (02) at (-1.27710201672837, -0.159984923951115, -0.113126424613447) {};
		\node (03) at (-0.777102016728374, -0.448660058545928, -0.929623005541173) {};
		\node (10) at (0.500000000000000, 1.18599525168667, -0.113126424613447) {};
		\node (11) at (0.000000000000000, 1.85957056574123, -0.657457478565264) {};
		\node (12) at (-0.500000000000000, 1.18599525168667, -0.113126424613447) {};
		\node (13) at (0.000000000000000, 0.897320117091856, -0.929623005541173) {};
		\node (20) at (1.27710201672837, -0.159984923951116, -0.113126424613447) {};
		\node (21) at (0.777102016728374, -1.02601032773555, -0.113126424613447) {};
		\node (22) at (1.61043535006171, -0.929785282870617, -0.657457478565264) {};
		\node (23) at (0.777102016728374, -0.448660058545928, -0.929623005541173) {};
		\node (30) at (0.500000000000000, 0.288675134594813, 1.15587585476807) {};
		\node (31) at (-8.32667268468867e-17, -0.577350269189626, 1.15587585476807) {};
		\node (32) at (-0.500000000000000, 0.288675134594813, 1.15587585476807) {};
		\node (33) at (0.000000000000000, 0.000000000000000, 1.97237243569579) {};


		\draw[fill=lightgray,opacity=0.9] (00.center) -- (01.center) -- (02.center)--cycle;
		
		\draw[fill=lightgray,opacity=0.9] (00.center) -- (02.center) -- (03.center)--cycle;
		\draw[fill=lightgray,opacity=0.9] (00.center) -- (01.center) -- (03.center)--cycle;
		\draw[fill=lightgray,opacity=0.9] (01.center) -- (02.center) -- (03.center)--cycle;
		
		\draw[fill=lightgray,opacity=0.9] (01.center) -- (21.center) -- (31.center)--cycle;
		\draw[fill=lightgray,opacity=0.9] (02.center) -- (12.center) -- (32.center)--cycle;
		\draw[fill=lightgray,opacity=0.9] (03.center) -- (13.center) -- (23.center)--cycle;
		\draw[fill=lightgray,opacity=0.9] (10.center) -- (11.center) -- (12.center)--cycle;
		\draw[fill=lightgray,opacity=0.9] (10.center) -- (11.center) -- (13.center)--cycle;

		\draw[fill=lightgray,opacity=0.9] (11.center) -- (12.center) -- (13.center)--cycle;
		
		\draw[fill=lightgray,opacity=0.9] (10.center) -- (12.center) -- (13.center)--cycle;
		\shadedraw [ball color= lightgray, line width=.25pt] (12) circle (0.04cm);
		\draw[fill=lightgray,opacity=0.9] (10.center) -- (20.center) -- (30.center)--cycle;
		
		\draw[fill=lightgray,opacity=0.9] (01.center) -- (21.center) -- (23.center) -- (03.center)--cycle;
		\draw[fill=lightgray,opacity=0.9] (20.center) -- (21.center) -- (23.center)--cycle;
		
		\draw[fill=lightgray,opacity=0.9] (21.center) -- (22.center) -- (23.center)--cycle;
		\draw[fill=lightgray,opacity=0.9] (20.center) -- (22.center) -- (23.center)--cycle;
		\draw[fill=lightgray,opacity=0.9] (20.center) -- (21.center) -- (22.center)--cycle;
		
		\shadedraw [ball color= lightgray, line width=.25pt] (32) circle (0.04cm);
		\draw[fill=lightgray,opacity=0.9] (30.center) -- (32.center) -- (33.center)--cycle;
		\draw[fill=lightgray,opacity=0.9] (31.center) -- (32.center) -- (33.center)--cycle;
		
		\draw[fill=lightgray,opacity=0.9] (30.center) -- (31.center) -- (32.center)--cycle;
		\draw[fill=lightgray,opacity=0.9] (30.center) -- (31.center) -- (33.center)--cycle;

	\shadedraw [ball color= lightgray, line width=.25pt] (02) circle (0.04cm);

		
		\draw[fill=lightgray,opacity=0.9] (13.center) -- (23.center) -- (20.center) -- (10.center)--cycle;
		\draw[fill=lightgray,opacity=0.9] (20.center) -- (21.center) -- (31.center) -- (30.center)--cycle;

		\shadedraw [ball color= white, line width=.25pt] (00) circle (0.04cm);
		\shadedraw [ball color= gray, line width=.25pt] (01) circle (0.04cm);
		
		\shadedraw [ball color= gray, line width=.25pt] (03) circle (0.04cm);
		\shadedraw [ball color= gray, line width=.25pt] (10) circle (0.04cm);
		\shadedraw [ball color= lightgray!75!black, line width=.25pt] (11) circle (0.04cm);
	
		\shadedraw [ball color= gray, line width=.25pt] (13) circle (0.04cm);
		\shadedraw [ball color= gray, line width=.25pt] (20) circle (0.04cm);
		\shadedraw [ball color= gray, line width=.25pt] (21) circle (0.04cm);
		\shadedraw [ball color= lightgray!50!black, line width=.25pt] (22) circle (0.04cm);
		\shadedraw [ball color= gray, line width=.25pt] (23) circle (0.04cm);

		\shadedraw [ball color= gray, line width=.25pt] (30) circle (0.04cm);
		\shadedraw [ball color= gray, line width=.25pt] (31) circle (0.04cm);
		\shadedraw [ball color= black, line width=.25pt] (33) circle (0.04cm);

	 \end{tikzpicture}
}
			\tdplotsetmaincoords{75}{72}
\resizebox{2in}{!}{
	\begin{tikzpicture}[tdplot_main_coords, line join=round]

		\node (0x1) at (-0.449788251933767, -0.837035637521917, 0.0204988603092420) {};
		\node (0x1L) at (-0.499788251933767, -0.808168124062436, 0.102148518402015) {};
		\node (0x1R) at (-0.399788251933767, -0.865903150981398, -0.0611507977835306) {};

		\node (0x3) at (-0.449788251933767, -0.259685368332291, -0.795997720618484) {};
		\node (0x3L) at (-0.399788251933767, -0.346287908710735, -0.795997720618484) {};
		\node (0x3R) at (-0.499788251933767, -0.173082827953847, -0.795997720618484) {};

		\node (0x2) at (-0.949788251933766, 0.0289897662625216, 0.0204988603092424) {};
		\node (0x2L) at (-0.949788251933766, 0.0867247931814842, -0.0611507977835302) {};
		\node (0x2R) at (-0.949788251933766, -0.0287452606564410, 0.102148518402015) {};

		\node (1x2) at (-0.500000000000000, 0.808045871259395, 0.0204988603092420) {};
		\node (1x2L) at (-0.450000000000000, 0.836913384718877, 0.102148518402015) {};
		\node (1x2R) at (-0.550000000000000, 0.779178357799914, -0.0611507977835306) {};

		\node (1x3) at (0.000000000000000, 0.519370736664583, -0.795997720618484) {};
		\node (1x3L) at (-0.100000000000000, 0.519370736664583, -0.795997720618484) {};
		\node (1x3R) at (0.100000000000000, 0.519370736664583, -0.795997720618484) {};

		\node (1x0) at (0.500000000000000, 0.808045871259396, 0.0204988603092424) {};
		\node (1x0L) at (0.550000000000000, 0.779178357799914, -0.0611507977835302) {};
		\node (1x0R) at (0.450000000000000, 0.836913384718877, 0.102148518402015) {};

		\node (2x0) at (0.949788251933766, 0.0289897662625212, 0.0204988603092420) {};
		\node (2x0L) at (0.949788251933766, -0.0287452606564414, 0.102148518402015) {};
		\node (2x0R) at (0.949788251933766, 0.0867247931814838, -0.0611507977835306) {};

		\node (2x3) at (0.449788251933766, -0.259685368332292, -0.795997720618484) {};
		\node (2x3L) at (0.499788251933767, -0.173082827953848, -0.795997720618484) {};
		\node (2x3R) at (0.399788251933766, -0.346287908710735, -0.795997720618484) {};

		\node (2x1) at (0.449788251933766, -0.837035637521917, 0.0204988603092424) {};
		\node (2x1L) at (0.399788251933766, -0.865903150981398, -0.0611507977835302) {};
		\node (2x1R) at (0.499788251933766, -0.808168124062436, 0.102148518402015) {};

		\node (3x1) at (-1.94289029309402e-16, -0.577350269189626, 0.755000000000000) {};
		\node (3x1L) at (0.0999999999999998, -0.577350269189626, 0.755000000000000) {};
		\node (3x1R) at (-0.100000000000000, -0.577350269189626, 0.755000000000000) {};

		\node (3x2) at (-0.500000000000000, 0.288675134594813, 0.755000000000000) {};
		\node (3x2L) at (-0.550000000000000, 0.202072594216369, 0.755000000000000) {};
		\node (3x2R) at (-0.450000000000000, 0.375277674973257, 0.755000000000000) {};

		\node (3x0) at (0.500000000000000, 0.288675134594813, 0.755000000000000) {};
		\node (3x0L) at (0.450000000000000, 0.375277674973256, 0.755000000000000) {};
		\node (3x0R) at (0.550000000000000, 0.202072594216369, 0.755000000000000) {};

		\draw[fill=silver, opacity=.9] (0x2.center)--(0x1.center)--(0x3.center)--cycle;

		\draw[fill=silver, opacity=.9] (1x2.center)--(1x3.center)--(1x0.center)--cycle;

		\draw[fill=silver, opacity=.9] (3x2.center)--(1x2.center)--(0x2.center)--cycle;


		\shadedraw [ball color= blue!20!white, line width=.25pt] (0x1L) circle (0.04cm);
		\shadedraw [ball color= ao(english)!40!white, line width=.25pt] (0x1R) circle (0.04cm);

		\shadedraw [ball color= blue!20!white, line width=.25pt] (0x3L) circle (0.04cm);
		\shadedraw [ball color= ao(english), line width=.25pt] (0x3R) circle (0.04cm);

		\shadedraw [ball color= blue!20!white, line width=.25pt] (0x2L) circle (0.04cm);
		\shadedraw [ball color= ao(english)!60!white, line width=.25pt] (0x2R) circle (0.04cm);
		\shadedraw [ball color= ao(english)!60!white, line width=.25pt] (1x2R) circle (0.04cm);
		\shadedraw [ball color= blue!40!white, line width=.25pt] (1x3L) circle (0.04cm);
		\draw[fill=silver, opacity=.9] (0x2.center)--(0x3.center)--(1x3.center)--(1x2.center)--cycle;
		
		
		\shadedraw [ball color=  blue!40!white, line width=.25pt] (1x2L) circle (0.04cm);

	
		\shadedraw [ball color= ao(english), line width=.25pt] (1x3R) circle (0.04cm);

		\shadedraw [ball color=  blue!40!white, line width=.25pt] (1x0L) circle (0.04cm);
		\shadedraw [ball color= green!20!white, line width=.25pt] (1x0R) circle (0.04cm);

		\draw[fill=silver, opacity=.9] (3x2.center)--(3x0.center)--(1x0.center)--(1x2.center)--cycle;
		\draw[fill=silver, opacity=.9] (3x1.center)--(3x2.center)--(0x2.center)--(0x1.center)--cycle;
		\shadedraw [ball color= blue, line width=.25pt] (3x2L) circle (0.04cm);
		\draw[fill=silver, opacity=.9] (2x0.center)--(2x1.center)--(3x1.center)--(3x0.center)--cycle;			
		\draw[fill=silver, opacity=.9] (3x1.center)--(3x2.center)--(3x0.center)--cycle;
		\draw[fill=silver, opacity=.9] (2x0.center)--(2x1.center)--(2x3.center)--cycle;
		\draw[fill=silver, opacity=.9] (3x0.center)--(2x0.center)--(1x0.center)--cycle;
		\draw[fill=silver, opacity=.9] (3x1.center)--(2x1.center)--(0x1.center)--cycle;
		\draw[fill=silver, opacity=.9] (1x0.center)--(2x0.center)--(2x3.center)--(1x3.center)--cycle;
		\draw[fill=silver, opacity=.9] (0x3.center)--(2x3.center)--(2x1.center)--(0x1.center)--cycle;ama
		\shadedraw [ball color= blue!60!white, line width=.25pt] (2x0L) circle (0.04cm);
		\shadedraw [ball color= green!20!white, line width=.25pt] (2x0R) circle (0.04cm);

		\shadedraw [ball color= blue!60!white, line width=.25pt] (2x3L) circle (0.04cm);
		\shadedraw [ball color= ao(english), line width=.25pt] (2x3R) circle (0.04cm);

		\shadedraw [ball color= blue!60!white, line width=.25pt] (2x1L) circle (0.04cm);
		\shadedraw [ball color= ao(english)!40!white, line width=.25pt] (2x1R) circle (0.04cm);

		\shadedraw [ball color= ao(english)!40!white, line width=.25pt] (3x1R) circle (0.04cm);
		\shadedraw [ball color= blue, line width=.25pt] (3x1L) circle (0.04cm);

		
		\shadedraw [ball color= ao(english)!60!white, line width=.25pt] (3x2R) circle (0.04cm);

		\shadedraw [ball color= blue, line width=.25pt] (3x0L) circle (0.04cm);
		\shadedraw [ball color= green!20!white, line width=.25pt] (3x0R) circle (0.04cm);

	 \end{tikzpicture}
}
		\caption{Illustrated Examples of the graphs $H^{1,2}_4$ and $H^{2,1}_4$; only the edges on the hull of $H^{2,1}_4 \equiv K_{12}$ are presented.}\label{figure::stateGraphEx}
	\end{figure}
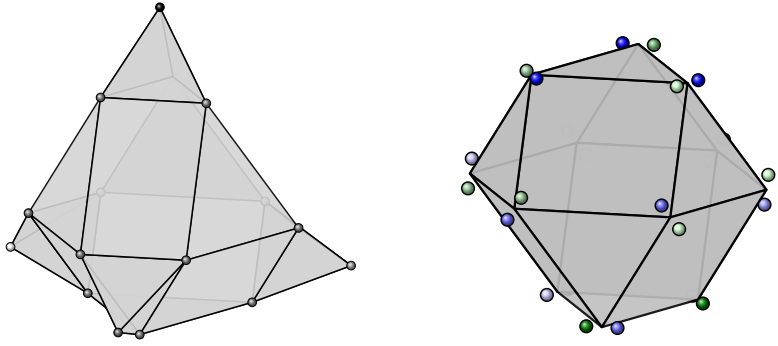

We now define a {\sl cluster walk}, and we will equip walks in the parallel state space with a measure in the parallel state space that naturally aligns with the counting measure of the classic Puzzle. To this end,
we will introduce an auxiliary graph object for the parallel Puzzle.

\begin{definition}\label{definition::transitionGraph}
   Let configurations $\alpha$ and $\beta$ be EREW-adjacent.  We define the {\sl transition graph} of the adjacent pair to be the directed, edge-labelled, graph $\parentheses{\posts, \graded{j}}$  on  $\posts$  with edge-labels over $\nSet{t} \times \nSet{n}$: the edge $(q, r)$ is in the edge set of the transition graph with  label  $\parentheses{u,j_u}$ if and only if the disk $\disk{u,j_u}$ transfers from post $q$ to post $r$. 
   
   The edge set $\graded{j}$ is defined to be the {\sl transfer vector} of the EREW pair $\parentheses{\alpha, \beta}$. We will say that the configurations are {\sl $\graded{j}$-adjacent}, and express this relation with the notation $\clusterPair[\graded{j}]{\alpha}{\beta}$. 
   
   We denote the number of edges in the transition graph of the pair $\clusterPair[\graded{j}]{\alpha}{\beta}$ to be $\tgec{\alpha,\beta}$. If $\alpha = \beta$, then we define $\tgec{\alpha,\beta} = 0$.

   
\end{definition}

As per the rules of the parallel Puzzle, the in-degree and out-degree of any vertex in the transition graph is at most one.

We will now define adjaceny for clusters as follows. 


\begin{definition}[Parallel Cluster Pair]\label{definition::pClusterAdjacency}
	Let clusters $A,B\in \clusterSet{t,\graded{g}}_p$ for some fixed grading $\graded{g} \in \nSet{n}^t$ where $A\neq B$.  The clusters $A$ and $B$ are {\sl EREW-adjacent} if and only if there
are configurations $\alpha\in\clusterSetOfCluster{n}{A}$ and $\beta\in\clusterSetOfCluster{n}{B}$  where $\clusterPair[\graded{k}]{\alpha}{\beta}$ for some transfer vector $\graded{k}$. 

	Let $\graded{j}$ denote the subset\footnote{As $A\neq B$, we are assured that $\graded{j}\neq \emptyset$.} of $\graded{k}$ where
	\[
	\graded{j} = \braces{ (q,r) \mid \textmd{the edge label of } (q,r) \textmd{ is } (u,j_u) \textmd{ where } j_u \leq g }.
	\]  We will say that $A$ and $B$ form a {\sl $\graded{j}$-adjacent cluster pair (with transfer vector $\graded{j}$)}, denoted by $\clusterPair[\graded{j}]{A}{B}$.

\end{definition}

\begin{definition}[Parallel Cluster Walk]\label{definition::pClusterWalk}
	Let $A,B\in\clusterSet{t,\graded{g}}_p$ for some fixed grading $\graded{g} \in \nSet{n}^t$.  A {\sl (cluster) walk (of grading $\graded{g}$)} connecting $A$ and $B$, denoted as 
	\[
		\clusterSequence{\Path}{A}{B} = \List{A}{1}{w},
	\]  is a sequence of clusters of grading $\graded{g}$ (a {\sl $\graded{g}$-walk})
	 that satisfies the following properties: $A = A_1$, $B = A_w$, and, for $u \in \nOpenSet{w}$,  either
$A_u = A_{u+1}$ or $\clusterPair[\graded{j}]{A_u}{A_{u+1}}$ for some transfer vector $\graded{j}$.

	

%

\end{definition}

We will also define a {\sl valid} sequence of configurations $\List{\alpha}{1}{w}$ analogously (we allow repeated configurations in a valid sequence of configurations).

We will define both the {\sl sequence length} and {\sl transfer length} of configuration sequences.    If $\Path = \List{\alpha}{1}{w}$ is a configuration sequence, then its sequence length is defined to be $\numberOfStates{\Path} = w$ (the number of configurations in $\Path$), and its transfer length is defined to be $\transferLength{\Path} = \sum_{1\leq v < w} \tgec{\alpha_v,\alpha_{v+1}}$ (the sum of the number of edges in all transition graphs of $\Path$).

We will also say that a sequence is {\sl contained} in some cluster $A$ if and only if each configuration in the sequence is an element of $\clusterSetOfCluster[p]{t,n}{A}$.

In order to establish results on the transfer length of configuration sequences, we will formally prescribe disk transfers as mappings on such sequences.

\begin{definition}[Translative and Reflective Mappings]

 Let the disk configuration $\alpha =  \left(a_{u,y}\right)\in  \clusterSet{t,n}_p$, and  let  $g\in \nSet{n}$.
  
  \begin{enumerate}[i.]
\item  Let $\graded{g} = g\cdot\graded{1} \in \nSet{n}^t$. Let the cluster $B\in \clusterSet{t,\graded{g}}_p$ where
 the disk $\disk{u,y}$ occupies the post $b_{u,y}$ for $u\in \nSet{t}$ and $y\in \nSet{g}$. 
 The translative map $\pTranslativeMap{B}: \clusterSet{t,n}_p \to \clusterSet{t,n}_p(B)$ maps $\alpha$
 to the configuration \pTranslativeMapArg{B}{\alpha}, where
\[
\pTranslativeMapArg{B}{\alpha} = 
\begin{bmatrix}
    b_{1,1}  & \dots & b_{1,g} &a_{1,g+1} & \dots & a_{1,n} \\
    \vdots  & \ddots  & \vdots& \vdots & \ddots & \vdots \\
    b_{t,1}  & \dots  & b_{t,g} & a_{t,g+1} & \dots  &a_{t,n}
\end{bmatrix}.
\] 

\item   Let  $q,r\in \posts$. 
The reflective map  $\reflectiveMap[g]{q}{r}: \clusterSet{t,n}_p \to \clusterSet{t,n}_p$ maps $\alpha$  to the configuration $\reflectiveMap[g]{q}{r}\parentheses{\alpha} = \left(b_{v,y}\right)$, where,   
 for $v\in \nSet{t}$ and $y\geq g$, 
\begin{enumerate}
	\item if $a_{v,y} = q$, then  $b_{v,y} = r$;
	\item if $a_{v,y} = r$, then $ b_{v,y} = q$;
	\item if $a_{v,y} \notin\{r,q\}$, then $ b_{v,y} = a_{v,y}$.
\end{enumerate}
\end{enumerate}
\end{definition}

We will now identify a class of mappings that for which the images of valid configuration sequences are themselves valid.


\begin{lemma}[Mappings on Configuration Sequences]\label{lemma::clusterMappings} Let $\Path = \List{\alpha}{1}{w}$ be a valid sequence of configurations.  Also, let $g\in \nSet{n}$,  and let $\graded{g} = g\cdot\graded{1} \in \nSet{n}^t$. 

\begin{enumerate}
	\item \label{TranslativeMappingProof}
For any cluster $B \in \clusterSet{t, \graded{g}}_p$, the sequence
	$
	\pTranslate{\Path}{B} = \parentheses{\pTranslate{\alpha_v}{B}}_{v\in \nSet{w}}
	$ is a  valid sequence of configurations contained in $B$.
	
	\item If there exists a cluster $B \in \clusterSet{t, \graded{g}-\graded{1}}_p$ that contains \Path, then, for $q,r\in \posts$,  the sequence 
	$\reflect[g]{\Path}{q}{r} = \parentheses{\reflect[g]{\alpha_v}{q}{r}}_{v\in \nSet{w}}$ is a  valid sequence of configurations contained in $B$.
\end{enumerate}
\end{lemma}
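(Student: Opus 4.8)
The plan is to verify, for each of the two mappings separately, the two defining conditions of EREW-adjacency (Non-triviality and Stack Read/Write) for every consecutive pair in the image sequence, together with the containment claim; validity of the image sequence then follows immediately since "valid sequence" is defined pairwise. The key observation throughout is that both maps act only on a prescribed block of columns and leave the complementary block untouched, so the disks that transfer between $\alpha_v$ and $\alpha_{v+1}$ are in bijection with the disks that transfer between the images, and one only needs to check that the adjacency conditions are not spoiled by the substitution or the post-swap.

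For part~\itemref{TranslativeMappingProof}: fix $v\in\nSetOpen{w}$ and write $\alpha=\alpha_v$, $\beta=\alpha_{v+1}$ with $\clusterPair[\graded{k}]{\alpha}{\beta}$ (or $\alpha=\beta$, which is trivial). First I would note that $\pTranslate{\alpha}{B}$ and $\pTranslate{\beta}{B}$ agree with $B$ in columns $1,\dots,g$ and with $\alpha$, resp.\ $\beta$, in columns $g+1,\dots,n$. Hence a disk $\disk{u,j_u}$ moves between the two images iff $j_u>g$ and $a_{u,j_u}\neq b_{u,j_u}$; in particular no disk among the largest $g$ of any color moves, so Non-triviality is inherited directly from the $(\alpha,\beta)$ pair (the transferring disk has index $>g$, hence was already transferring in $\Path$). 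For Stack Read/Write I would take a transferring disk $\disk{u,j_u}$ (so $j_u>g$) and a pair $(v',y)$ with $y>j_u$; then $y>g$ as well, so the post of $\disk{v',y}$ in the image equals its post in $\alpha$ (resp.\ $\beta$), and the post of $\disk{u,j_u}$ in the image equals its post in $\alpha$ (resp.\ $\beta$), so the required inequalities $a_{v',y}\neq a_{u,j_u}$ and $b_{v',y}\neq b_{u,j_u}$ are exactly the ones guaranteed by EREW-adjacency of $(\alpha,\beta)$. One separate point to dispatch is that each image $\pTranslate{\alpha_v}{B}$ is itself a \emph{valid configuration} (no disk atop a smaller-or-equal one): this holds because $\alpha_v$ is valid and $B\in\clusterSet{t,\graded{g}}_p$ is by definition a set containing at least one valid configuration with the largest $g$ disks of each color in the configuration recorded by $(b_{u,y})$, so splicing the valid "top part" of $\alpha_v$ onto a valid "bottom part" coming from $B$ is legal precisely because the compatibility of a disk with what lies below it depends only on the posts of larger disks — and one must check this splicing point carefully. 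Containment in $B$ is immediate from the definition of $\pTranslativeMap{B}$ and of $\clusterSetOfCluster[p]{t,n}{B}$.

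For part~2 the structure is identical. Assume $B\in\clusterSet{t,\graded{g}-\graded{1}}_p$ contains $\Path$, so every $\alpha_v$ has its largest $g-1$ disks of each color fixed in a common arrangement. The map $\reflectiveMap[g]{q}{r}$ only permutes the two posts $q,r$ and only on disks of index $\geq g$; since the fixed largest $g-1$ disks have index $<g$ they are untouched, so the image sequence still lies in $B$. For a consecutive pair, $\disk{u,j_u}$ transfers between $\reflect[g]{\alpha}{q}{r}$ and $\reflect[g]{\beta}{q}{r}$ iff it transferred between $\alpha$ and $\beta$, since applying the same involution $\sigma=(q\,r)$ to both endpoints preserves equality and inequality of posts. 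Non-triviality is thus inherited, and such a transferring disk necessarily has index $\geq g$ (indeed $>g-1$), so for $(v',y)$ with $y>j_u\geq g$ the relevant posts in the image are $\sigma$ applied to the posts in $\alpha$ (resp.\ $\beta$), and $\sigma$ being a bijection carries the inequalities $a_{v',y}\neq a_{u,j_u}$, $b_{v',y}\neq b_{u,j_u}$ to the corresponding inequalities for the image. Validity of each image configuration follows because applying a single post-transposition to a valid configuration yields a valid configuration (the partial order "disk $i$ below disk $j$ on a common post" is preserved under any relabelling of posts).

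The main obstacle I expect is \textbf{not} the adjacency bookkeeping — which is essentially a mechanical check that both maps touch only columns disjoint from, or compatible with, the columns that witness adjacency — but rather the care needed at the "splice point" in part~\itemref{TranslativeMappingProof}: one must argue that gluing the bottom $g$ columns of a configuration drawn from $B$ onto the top $n-g$ columns of a valid $\alpha_v$ cannot create an illegal stacking, i.e.\ that no disk of index $>g$ sits atop a disk of index $\leq g$ on the same post in a forbidden way. This reduces to the fact that, by the column-ordering convention (disks listed in decreasing size left to right) and the validity of $\alpha_v$ together with the validity of the representative of $B$, the only stacking constraints crossing the splice are of the form "a smaller disk above a larger disk," which is exactly what legality requires; making this precise is the one place where the matrix/column conventions of Section~\ref{section::setup} have to be invoked explicitly rather than waved at.
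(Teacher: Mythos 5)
Your overall strategy is the same as the paper's: verify the two EREW-adjacency conditions for each consecutive pair of images, using the fact that both maps act as the identity on the columns that witness adjacency. For part 2 your packaging is in fact cleaner than the paper's: where the paper runs an exhaustive case analysis over whether each relevant post equals $q$, $r$, or neither, you observe that on all positions of index $\geq g$ the reflective map applies one fixed transposition of $\posts$, and a bijection preserves every equality and inequality needed for Non-triviality and Stack Read/Write. That is a correct and shorter route to the same conclusion.

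There is, however, one step in part~\itemref{TranslativeMappingProof} that fails as stated: the claim that ``Non-triviality is inherited directly from the $(\alpha,\beta)$ pair.'' It is not. If every disk that transfers between $\alpha_v$ and $\alpha_{v+1}$ has index $\leq g$, then $\pTranslate{\alpha_v}{B} = \pTranslate{\alpha_{v+1}}{B}$ and the image pair is not adjacent at all; your parenthetical only shows that transfers surviving in the image were already present in $\Path$, not that any survive. This is not a pathological corner case --- it is exactly the situation exploited in Theorem \ref{theorem::1}, where the translative map is applied precisely to kill the transfers of the $g$ largest disks and thereby shorten the walk. The lemma survives because the paper explicitly allows repeated configurations in a valid sequence, and its proof singles this case out (``if $\graded{i}$ is empty, then we have the equality\dots''); your argument needs the corresponding one-sentence patch, namely that each image pair is either equal or $\graded{i}$-adjacent for the (possibly empty) set $\graded{i}$ of transfers of index exceeding $g$. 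Separately, the ``splice point'' you flag as the main obstacle is a non-issue in the paper's formalism: a configuration is just a matrix of post assignments with distinct entries in each column, the stacking order on a post being determined by disk size, so gluing the first $g$ columns of $B$ onto the remaining columns of $\alpha_v$ cannot create an illegal stacking and there is nothing to check beyond column-wise distinctness, which is preserved.
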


\begin{proof} Assume the hypotheses and notation in the statement of the lemma.  For each $v\in \nSet{w}$, let $\alpha_{v}  = \parentheses{a^{(v)}_{u,y}}$.
	\begin{enumerate}

		\item   Assume that the cluster $B$ is such that
 the disk $\disk{u,y}$ occupies the post $b_{u,y}$ for $u\in \nSet{t}$ and $y\in \nSet{i}$. 
 
 If $\alpha_v = \alpha_{v+1}$, then we have the equality $\pTranslate{\alpha_v}{B} = \pTranslate{\alpha_{v+1}}{B}$.
		
   		If $\clusterPair[\graded{j}]{\alpha_v}{\alpha_{v+1}}$  for some transfer vector $\graded{j}$, then write $\pTranslate{\alpha_v}{B}  = \parentheses{b^{(v)}_{u,y}}$ where
		\[
		b^{(v)}_{u,y} = 
			\begin{cases}
				a^{(v)}_{u,y} & y > g\\
				b_{u,y} & y \leq g. 
			\end{cases}
		\]
		The claim is that either $\pTranslate{\alpha_v}{B} = \pTranslate{\alpha_{v+1}}{B}$ or $\clusterPair[\graded{i}]{\pTranslate{\alpha_v}{B} }{\pTranslate{\alpha_{v+1}}{B} }$ for some transfer vector $\graded{i}$ where $\graded{i} \subseteq \graded{j}$.
		
		Let \graded{i} denote the subset of $\graded{j}$ where
		\[
		\graded{i} = \braces{ (q,r) \mid \textmd{the edge label of } (q,r) \textmd{ is } (u,j_u) \textmd{ where } j_u > g }.
		\]  If $\graded{i}$ is empty, then we have the equality $\pTranslate{\alpha_v}{B} = \pTranslate{\alpha_{v+1}}{B}$. Otherwise, if $a^{(v)}_{u,j_u} \neq a^{(v+1)}_{u,j_u}$ where $j_u > g$, then $a^{(v)}_{w,y} \neq a^{(v)}_{w,j_u}$ and $a^{(v+1)}_{w,y} \neq a^{(v+1)}_{u,j_u}$ for $w\in \nSet{t}$ and $y > j_u$ (by the definition of the \graded{j}-adacency of $\alpha_v$ and $\alpha_{v+1}$).  As $j_u > g$, we have that $b^{(v)}_{u,j_u} =a^{(v)}_{u,j_u}$ and $b^{(v+1)}_{u,j_u} =a^{(v+1)}_{u,j_u}$; furthermore, as the equalities $b^{(v)}_{w,y} = a^{(v)}_{w,y}$ and $b^{(v+1)}_{w,y} = a^{(v+1)}_{w,y}$ hold for $w>j_u$, it follows that $b^{(v)}_{w,y} \neq b^{(v)}_{w,j_u}$ and $b^{(v+1)}_{w,y} \neq b^{(v+1)}_{u,j_u}$.  Thus, the configurations $\pTranslate{\alpha_v}{B} $ and $\pTranslate{\alpha_{v+1}}{B}$ form an \graded{i}-adjacent pair $\clusterPair[\graded{i}]{\pTranslate{\alpha_v}{B} }{\pTranslate{\alpha_{v+1}}{B} }$.
		
%
%
			

%
%
		\item If $\alpha_v = \alpha_{v+1}$, then we have the equality $\reflect[g]{\alpha_v}{q}{r} = \reflect[i]{\alpha_{v+1}}{q}{r}$.  
		
		If $q=r$, then $\pReflect[g]{\Path}{q}{r} = \Path$. Thus, we will assume that $q\neq r$.
		
		If $\clusterPair[\graded{j}]{\alpha_v}{\alpha_{v+1}}$ for some transfer vector \graded{j}, then, under the assumption that $\sequenceDecomposition{\Path}{\graded{g} - \graded{1}} = (B)$, it must be the case that each edge in $\graded{j}$ has an edge label $(u,j_u)$ where $j_u\geq g$.  Let $\pReflect[g]{\alpha_v}{q}{r} = \parentheses{b^{(v)}_{u,y}}$, where, for $y \geq g$, we define
		\[
		b^{(v)}_{u,y} = 
		\begin{cases}
			r &  a^{(v)}_{u,y} = q,  \\
			q &  a^{(v)}_{u,y} = r,  \\
			a^{(v)}_{u,y} & a^{(v)}_{u,y} \notin \{q,r\} .
		\end{cases}
		\]  We will show that $\clusterPair[\graded{j}]{\pReflect[i]{\alpha_v}{q}{r}}{\pReflect[i]{\alpha_{v+1}}{q}{r}}$ forms a cluster pair. 
		\begin{enumerate}
			\item If $a^{(v)}_{u,j_u} \neq a^{(v+1)}_{u, j_u}$, then: 
				\begin{enumerate}
					\item if $a^{(v)}_{u,j_u} = q$ and $a^{(v+1)}_{u,j_u} = r$, then $b^{(v)}_{u,j_u} = r$, and $b^{(v+1)}_{u,j_u}=q$; an analogous argument holds for when $a^{(v)}_{u,j_u} = r$ and $a^{(v+1)}_{u,j_u} =q $;
					\item  if $a^{(v)}_{u,j_u} = q$ and $a^{(v+1)}_{u,j_u} \notin  \{q,r\}$, then $b^{(v)}_{u,j_u}  = r$, and $b^{(v+1)}_{u,j_u}=a^{(v+1)}_{u,j_u} \notin \{q,r\}$;  analogous arguments hold for when  $a^{(v)}_{u,j_u}= r$ and $a^{(v+1)}_{u,j_u} \notin  \{q,r\}$, and when $a^{(v)}_{u,j_u}\notin \{q,r\}$ and $a^{(v+1)}_{u,j_u}\in  \{q,r\}$;
					\item  if $a^{(v)}_{u,j_u} \notin \{q,r\}$ and $a^{(v+1)}_{u,j_u} \notin  \{q,r\}$, then $b^{(v)}_{u,j_u} = a^{(v)}_{u,j_u}$, and $b^{(v+1)}_{u,j_u}=a^{(v+1)}_{u,j_u}$.
				\end{enumerate}  In all cases, the inequality $b^{(v)}_{u,j_u}\neq b^{(v+1)}_{u,j_u}$ holds, and the non-triviality condition \ref{definition::adjacency}.\ref{nontriviality} is met.

			\item\label{ReflectiveMapStackReadWriteProof} If $a^{(v)}_{u,j_u} \neq a^{(v+1)}_{u,j_u}$, then  $a^{(v)}_{s,y} \neq a^{(v)}_{u,j_u} $ and  $a^{(v+1)}_{s,y} \neq a^{(v+1)}_{u,j_u} $ for $s\in \nSet{t}$ and $y>j_u$, as per condition \ref{definition::adjacency}.\ref{stackReadWrite}. \begin{enumerate}
					\item if the post $a^{(v)}_{u,j_u} = q$ and the post $a^{(v+1)}_{u,j_u} = r$, then the post $b^{(v)}_{u,j_u} = r$, and the post $b^{(v+1)}_{u,j_u} = q$;   furthermore, for $y> j_u$,  the post $b^{(v)}_{s,y} \neq r$ (otherwise, the post $a^{(v)}_{s,y} = q$), and the post $b^{(v+1)}_{s,y}\neq q$ (otherwise, the post $a^{(v+1)}_{s,y} = r$); an analogous argument holds for when $a^{(v)}_{u,j_u} = r$ and $a^{(v+1)}_{u,j_u} =q$;
					\item  if the post $a^{(v)}_{u,j_u} = q$ and the post $a^{(v+1)}_{u,j_u} \notin  \{q,r\}$, then the post $b^{(v)}_{u,j_u} = r$ and the post $b^{(v+1)}_{u,j_u} = a^{(v+1)}_{u,j_u}\notin  \{q,r\}$.  Furthermore, the post $b^{(v)}_{s,y} \neq r$ (otherwise, the post $a^{(v)}_{s,y} = q$), and the post 
						\[
							b^{(v+1)}_{s,y}= 
							\begin{cases}
								q &a^{(v+1)}_{s,y} = r\\
								r & a^{(v+1)}_{s,y} = q\\ 
								a^{(v+1)}_{s,y} & a^{(v+1)}_{s,y}\notin \{q,r\};
							\end{cases}
						\]  in each case, as the post $b^{(v+1)}_{u,j_u} = a^{(v+1)}_{u,j_u} \notin \{q,r\}$, we have the inequality $b^{(v+1)}_{s,y}  \neq b^{(v+1)}_{u,j_u}$.  Analogous arguments hold for when  the post $a^{(v)}_{u,j_u} = r$ and $a^{(v+1)}_{u,j_u} \notin  \{q,r\}$, and when the post $a^{(v)}_{u,j_u}\notin \{q,r\}$  and the post $a^{(v+1)}_{u,j_u} \in  \{q,r\}$;
					\item  if $a^{(v)}_{u,j_u} \notin \{q,r\}$ and $a^{(v+1)}_{u,j_u} \notin  \{q,r\}$, then $b^{(v)}_{u,j_u} = a^{(v)}_{u,j_u}$, and $b^{(v+1)}_{u,j_u}=a^{(v+1)}_{u,j_u}$; as before, whether $a^{(v)}_{s,y}\in \{q,r\}$ or not (and whether  $a^{(v+1)}_{s,y} \in \{q,r\}$ or not), we have the inequalities $b^{(v)}_{s,y} \neq b^{(v)}_{u, j_u}$ and $b^{(v+1)}_{s,y} \neq b^{(v+1)}_{u, j_u}$.
				\end{enumerate}
		\end{enumerate} In all cases, the stack read/write condition \ref{definition::adjacency}.\ref{stackReadWrite} is met.
	\end{enumerate}
\end{proof}

\section{Results}\label{section::results}

An elementary result of graph theory states that any minimal path in a graph connecting a pair of vertices must be acyclic. 
The first theorem extends the notion of {\sl acyclicity} of minimal walks into the context of parallel cluster walks.

\begin{theorem}\label{theorem::1}
	Let $g\in \nSet{n}$, let $\graded{g} = g\cdot \graded{1} \in \nSet{n}^t$,  and let $A \in \clusterSet{t,\graded{g}}_p$.  If the configurations $\alpha,\beta \in \clusterSetOfCluster[p]{t,n}{A}$, then every minimal configuration
sequence $\clusterWalk{\mu}{\alpha}{\beta} \subseteq \clusterSetOfCluster[p]{t,n}{A}$.
\end{theorem}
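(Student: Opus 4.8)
The plan is to push $\mu$ into the cluster $A$ via the translative map $\pTranslativeMap{A}$, to verify that this operation never increases the transfer length, and then to conclude that a minimal $\mu$ must already have been contained in $A$. Concretely, write $\mu = \clusterWalk{\mu}{\alpha}{\beta} = \List{\alpha}{1}{w}$ with $\alpha_1 = \alpha$ and $\alpha_w = \beta$, and set $\alpha_v = \parentheses{a^{(v)}_{u,y}}$. Since $A\in\clusterSet{t,\graded{g}}_p$ with $\graded{g} = g\cdot\graded{1}$ and $\alpha,\beta\in\clusterSetOfCluster[p]{t,n}{A}$, the $g$ largest disks of every color already occupy, in both $\alpha$ and $\beta$, the posts prescribed by $A$; hence $\pTranslate{\alpha}{A} = \alpha$ and $\pTranslate{\beta}{A} = \beta$. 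By Lemma \ref{lemma::clusterMappings}.\ref{TranslativeMappingProof}, the image $\pTranslate{\mu}{A} = \parentheses{\pTranslate{\alpha_v}{A}}_{v\in\nSet{w}}$ is a valid configuration sequence contained in $A$ that still connects $\alpha$ to $\beta$.

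The second step is a step-by-step comparison of transfer lengths. Fix $v\in\nOpenSet{w}$. If $\alpha_v = \alpha_{v+1}$ then $\pTranslate{\alpha_v}{A} = \pTranslate{\alpha_{v+1}}{A}$ and neither side contributes. Otherwise $\clusterPair[\graded{j}]{\alpha_v}{\alpha_{v+1}}$ for some transfer vector $\graded{j}$, and the internals of the proof of Lemma \ref{lemma::clusterMappings}.\ref{TranslativeMappingProof} show that $\pTranslate{\alpha_v}{A}$ and $\pTranslate{\alpha_{v+1}}{A}$ are either equal or form the $\graded{i}$-adjacent pair $\clusterPair[\graded{i}]{\pTranslate{\alpha_v}{A}}{\pTranslate{\alpha_{v+1}}{A}}$, where $\graded{i}\subseteq\graded{j}$ is exactly the set of edges of $\graded{j}$ carrying a label $(u,j_u)$ with $j_u > g$. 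In either case $\tgec{\pTranslate{\alpha_v}{A},\pTranslate{\alpha_{v+1}}{A}} = |\graded{i}| \leq |\graded{j}| = \tgec{\alpha_v,\alpha_{v+1}}$, with equality precisely when no disk of index $\leq g$ (of any color) is transferred from $\alpha_v$ to $\alpha_{v+1}$. Summing over $v$ yields $\transferLength{\pTranslate{\mu}{A}}\leq\transferLength{\mu}$.

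The third step invokes minimality and runs a short induction. Because $\pTranslate{\mu}{A}$ connects $\alpha$ and $\beta$ while $\mu$ has minimal transfer length, the two transfer lengths coincide; hence every step-wise inequality above is an equality, so no disk of index $\leq g$ is ever transferred along $\mu$. Then induction on $v$ finishes the argument: $\alpha_1 = \alpha\in\clusterSetOfCluster[p]{t,n}{A}$, and if $\alpha_v\in\clusterSetOfCluster[p]{t,n}{A}$ then, since the transfer $\alpha_v\to\alpha_{v+1}$ moves no disk of index $\leq g$, the $g$ largest disks of each color remain on the posts prescribed by $A$ in $\alpha_{v+1}$, so $\alpha_{v+1}\in\clusterSetOfCluster[p]{t,n}{A}$. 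Therefore $\clusterWalk{\mu}{\alpha}{\beta}\subseteq\clusterSetOfCluster[p]{t,n}{A}$, as claimed.

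The only real work lies in the second step: extracting from the proof of Lemma \ref{lemma::clusterMappings} the precise fact that the transition graph of the pair $\parentheses{\pTranslate{\alpha_v}{A},\pTranslate{\alpha_{v+1}}{A}}$ has edge set exactly $\graded{i}$ (so its edge count is $|\graded{i}|$), together with the bookkeeping that $|\graded{i}| = |\graded{j}|$ forces every transferred disk to have index exceeding $g$. Everything else ($\pTranslativeMap{A}$ fixing configurations already in $A$, and the inductive propagation of cluster membership) is routine. One point worth recording explicitly is that ``minimal configuration sequence'' is understood here with respect to the transfer length $\transferLength{\cdot}$, which is the measure under which the statement is proved.
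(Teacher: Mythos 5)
Your proof is correct and follows essentially the same route as the paper's: both hinge on applying the translative map $\pTranslativeMap{A}$ (which fixes $\alpha$ and $\beta$) and on the fact, extracted from Lemma \ref{lemma::clusterMappings}.\ref{TranslativeMappingProof}, that this map deletes precisely the transfers of disks of index at most $g$. The difference is only organizational --- the paper decomposes a walk that leaves $A$ into its cluster-walk parts and exhibits a strict shortening at each cluster boundary, whereas you argue directly on a minimal walk, using minimality to force equality of transfer lengths at every step and then propagating membership in $\clusterSetOfCluster[p]{t,n}{A}$ by induction.
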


\begin{proof}\label{proof::1}
	Assume the notation within the hypotheses of the theorem statement. 
	We will show that for any walk $\clusterSequence{\nu}{\alpha}{\beta}$ where $\nu \nsubseteq \clusterSetOfCluster{n}{A}$,
there exists a walk
\clusterSequence{\mu}{\alpha}{\beta} with the property that $\mu \subseteq \clusterSetOfCluster[p]{t,n}{A}$ and
$\numberOfTransfers{\nu}>\numberOfTransfers{\mu}$.
	
	As $\clusterSequence{\nu}{\alpha}{\beta} \nsubseteq \clusterSetOfCluster{n}{A}$, there exists a unique sequence of clusters $\List{B}{1}{m}$ of grading \graded{g} where $B_1 = B_m = A$, and  $\clusterPair[\graded{i}_w]{B_w}{B_{w+1}}$ for some transfer vector $\graded{i}_w$ for each $w\in \nOpenSet{m}$.

	Thus, we write $\nu$ as \[
	 \nu = \sum_{B_w}\nu_w,
	\]  where $\nu_w \subseteq \clusterSetOfCluster[p]{t,n}{B_w}$, and 
	\[
	\nu_w = \List{\alpha}{w,1}{w,l(w)}.
	\]

	For $w \in \nOpenSet{m}$, the $\graded{j}_w$-adjacent configurations $\clusterPair[\graded{j}_w]{\alpha_{w,l(w)}}{\alpha_{w+1,1}}$ satisfy the properties that $\alpha_{w,l(w)} \in \clusterSetOfCluster[p]{t,n}{B_w}$ and $\alpha_{w+1,1} \in \clusterSetOfCluster[p]{t,n}{B_{w+1}}$ where $\clusterPair[\graded{i}_w]{B_w}{B_{w+1}}$.
	
	By Definition \ref{definition::pClusterAdjacency}, the transfer vector
	\[
	\graded{i}_w = \braces{ (q,r) \mid \textmd{the edge label of } (q,r) \textmd{ is } (u,j_u) \textmd{ where } j_u \leq g } \neq \emptyset;
	\]  thus, as per the proof in  \ref{lemma::clusterMappings}.\ref{TranslativeMappingProof}, we have the following inequality on the edge counts of the translation graphs:
	\[
	\tgec{
		\pTranslate{\alpha_{w,l(w)}}{A}, 
		\pTranslate{\alpha_{w+1,1}}{A}
		} < 
	\tgec{
		\alpha_{w,l(w)},
		\alpha_{w+1,1}
		}
	\] (as the transfer of any disk of index less than or equal to $g$ is removed from the set $\graded{i}_w$).

	Thus, the walk $\mu = \pTranslate{\nu}{A}$ is contained in $\clusterSetOfCluster[p]{t,n}{A}$ as per Lemma \ref{lemma::clusterMappings},  and $\numberOfTransfers{\mu} <
\numberOfTransfers{\nu}$.
	
\end{proof}

The second theorem yields a conditional triangle inequality within the context of parallel clusters.

\begin{theorem}\label{theorem::2}  Fix $g\in \nSet{n}$,  let $\graded{g} = g\cdot \graded{1} \in \nSet{n}^t$, and let  $\graded{g-1} = (g-1)\cdot \graded{1} \in \nSet{n}^t$.  Fix $A_0 \in \clusterSet{t, \graded{g-1}}_p$ where the disk \disk{u,y} occupies the post $a_{u,y}$ for $u\in \nSet{t}$ and $y \in \nOpenSet{g}$.

Let $a,b,c\in \posts$ be pairwise unequal post values.  Fix $v\in \nSet{t}$, and let $\graded{g}_v =  \List{g-1+\delta}{v,1}{v,t}$ where $\delta_{v,u} = [u=v]$ for each $u\in \nSet{t}$.  Let the clusters $A, B$ and $C$ be elements of $\clusterSetOfCluster[p]{t,\graded{g}_v}{A_0}$ where the disk $\disk{v,g}$ occupies the post $a, b$ and $c$, respectively.

Let the configuration $\alpha\in\clusterSetOfCluster[p]{t,n}{A}$ be such that  the identity
\[
\reflect[g]{b}{c}{\alpha} = \alpha
\] holds, and let $\beta$ be a cluster where $\beta\in\clusterSetOfCluster[p]{t,n}{B}$. Let $\clusterSequence{\nu}{\alpha}{\beta}$ be a configuration sequence connecting $\alpha$ and $\beta$ where \[
	\nu = \nu_A + \nu_C + \nu_B,
	\] and the parts $\nu_A$, $\nu_B$, and $\nu_C$ are contained in the clusters $A$, $B$, and $C$, respectively.
	
	Then, there exists 
 a  configuration sequence
$\clusterSequence{\mu}{\alpha}{\beta}$ where \[\clusterSequence{\mu}{\alpha}{\beta} = \mu_A + \mu_B,\] the part $\mu_A $ is contained in $A$, the part  $ \mu_B $ is contained in $B$,  and the sequence $\clusterSequence{\mu}{\alpha}{\beta}$ satisfies the inequality
\[
\numberOfTransfers{\mu} <
\numberOfTransfers{\nu}.
\]
\end{theorem}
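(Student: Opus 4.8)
The plan is to build $\mu$ by reflecting the initial portion of $\nu$ about the transposition of posts $b$ and $c$ at level $g$, while leaving the block $\nu_B$ untouched; throughout write $R:=\reflectiveMap[g]{b}{c}$. Write the given sequence as $\nu=\nu_A+\nu_C+\nu_B$, let $\alpha'$ be the last configuration of $\nu_A$, let $\gamma_1,\gamma_2$ be the first and last configurations of $\nu_C$, and let $\beta'$ be the first configuration of $\nu_B$, so the two inter-block transfers are $\clusterPair[\graded{j}_1]{\alpha'}{\gamma_1}$ and $\clusterPair[\graded{j}_2]{\gamma_2}{\beta'}$ for some transfer vectors $\graded{j}_1,\graded{j}_2$. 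Since $A$, $B$, and $C$ are all sub-clusters of $A_0$ and $A_0$ has grading $\graded{g}-\graded{1}$, every configuration of $\nu$ lies in the cluster $A_0$, so no disk of index less than $g$ moves anywhere along $\nu$. Because $A$ and $C$ (respectively $C$ and $B$) differ only in the post occupied by $\disk{v,g}$, the transfer $\clusterPair[\graded{j}_1]{\alpha'}{\gamma_1}$ carries $\disk{v,g}$ from $a$ to $c$ and $\clusterPair[\graded{j}_2]{\gamma_2}{\beta'}$ carries it from $c$ to $b$; let $S$ be the set of disks other than $\disk{v,g}$ moved by the latter transfer, so that $\tgec{\gamma_2,\beta'}=1+|S|$ and every disk of $S$ has index at least $g$.

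First I would record two facts about $R$: it fixes the cluster $A$ setwise (since $a\notin\{b,c\}$ it fixes $\disk{v,g}$), maps $C$ onto $B$ (it moves $\disk{v,g}$ from $c$ to $b$), and fixes $A_0$ setwise; and it preserves transfer length along any sequence contained in $A_0$, because such a sequence moves only disks of index at least $g$, on which $R$ acts through a fixed permutation of the posts, so a disk changes position across a step precisely when it does across the reflected step. Now delete from $\nu$ the block $\nu_B$ and the transfer entering it, obtaining a valid configuration sequence $\nu'$ from $\alpha$ to $\gamma_2$ contained in $A_0$; by Lemma~\ref{lemma::clusterMappings}(2) its image $\rho:=R(\nu')$ is valid and contained in $A_0$, with $\numberOfTransfers{\rho}=\numberOfTransfers{\nu_A}+\tgec{\alpha',\gamma_1}+\numberOfTransfers{\nu_C}$. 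By the hypothesis $R(\alpha)=\alpha$, the sequence $\rho$ starts at $\alpha$ and ends at $R(\gamma_2)$; its configurations inherited from $\nu_A$ remain in $A$, those inherited from $\nu_C$ lie in $B$ (their copy of $\disk{v,g}$ now sits on $b$), and the single transfer joining the two halves of $\rho$ carries $\disk{v,g}$ directly from $a$ to $b$.

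Next I would join $R(\gamma_2)$ to $\beta'$ inside $B$ by a single transfer of transfer length at most $|S|$. Applying the stack read/write condition to the transfer of $\disk{v,g}$ in $\clusterPair[\graded{j}_2]{\gamma_2}{\beta'}$, together with the fact that no two disks of the same index can occupy a common post, shows that in $\gamma_2$ no disk of index at least $g$ other than $\disk{v,g}$ sits on $c$, and in $\beta'$ none sits on $b$. Tracking positions through $R$, this forces $R(\gamma_2)$ and $\beta'$ to agree on every disk of index below $g$, on $\disk{v,g}$, and on every disk of index at least $g$ lying outside $S$; hence they differ only on some $S'\subseteq S$. Finally, for each $d\in S'$ the stack read/write inequalities that $\clusterPair[\graded{j}_2]{\gamma_2}{\beta'}$ guarantees for $d$ survive transposing the two involved posts, so either $R(\gamma_2)=\beta'$, or $R(\gamma_2)$ and $\beta'$ are EREW-adjacent via the transfer moving exactly the disks of $S'$, whose transition graph has $|S'|\le|S|=\tgec{\gamma_2,\beta'}-1$ edges.

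Assembling, put $\mu:=\rho+\bigl(R(\gamma_2)\to\beta'\bigr)+\nu_B$, with $\mu_A$ the part of $\rho$ inherited from $\nu_A$ (contained in $A$) and $\mu_B$ the remainder of $\rho$ together with the joining transfer and $\nu_B$ (contained in $B$), the two parts being separated by the single $A$-to-$B$ transfer identified above. Then
\[
\numberOfTransfers{\mu}=\numberOfTransfers{\nu_A}+\tgec{\alpha',\gamma_1}+\numberOfTransfers{\nu_C}+|S'|+\numberOfTransfers{\nu_B}\;\le\;\numberOfTransfers{\nu}-1\;<\;\numberOfTransfers{\nu}.
\]
I expect the third paragraph to be the crux: verifying that after reflection $R(\gamma_2)$ differs from $\beta'$ only on disks already transferred by $\clusterPair[\graded{j}_2]{\gamma_2}{\beta'}$, and that those relocations can be realized simultaneously as one EREW transfer. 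This is precisely where the stack read/write constraints on the $\disk{v,g}$-transfer, and the impossibility of two equal-sized disks sharing a post, enter; the rest is bookkeeping on top of Lemma~\ref{lemma::clusterMappings} and the transfer-length-invariance of the reflection.
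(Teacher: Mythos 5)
Your proposal is correct and follows essentially the same route as the paper's proof: both reflect $\nu_A+\nu_C$ under $\reflectiveMap[g]{b}{c}$ (which fixes $\alpha$, fixes $A$ setwise, and carries $C$ onto $B$), retain $\nu_B$ unchanged, and show that the junction transfer into $\beta_1$ loses at least the $\disk{v,g}$ edge, so the total transfer length strictly decreases. Your bookkeeping via the set $S'$ of disks on which $\reflect[g]{\gamma_2}{b}{c}$ and $\beta'$ still disagree is the same case analysis the paper carries out on the edges $(q,r)\in\graded{j}$ according to their incidence with $\{b,c\}$.
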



\begin{proof}\label{proof::2}

	Assume the notation and hypotheses within the theorem statement.  We note that, by applying Theorem \ref{theorem::1} to walks connecting configurations in $\clusterSetOfCluster[p]{t,n}{A_0}$, we can  assume that the  configuration sequence $\nu$ is contained in the cluster $A_0$.  Furthermore, by applying Theorem  \ref{theorem::1} to walks connecting configurations within the clusters $A, B$ and $C$, we assume that we can uniquely express the walk $\nu$ as
	\[
	\nu = \nu_A + \nu_C + \nu_B,
	\] where $\nu_A, \nu_B$, and $\nu_C$  satisfy the Theorem statement assumptions.

Assume that the subwalk 
$$\nu_A =
\List{\alpha}{1}{x},$$ 
 the subwalk 
 $$\nu_C = \List{\gamma}{1}{z},$$
 and the subwalk  
  $$\nu_B =
\List{\beta}{1}{y}.$$

The following conditions hold within the images of $\nu_{A}$ and $\nu_C$ under the reflective map $\reflectiveMap[g]{b}{c}$:
\begin{enumerate}[i.]
	\item The cluster $\reflect[g]{\alpha_1}{b}{c} = \alpha_1$ by assumption.
	\item The image $ \reflect[g]{\nu_C}{b}{c}$ is contained in $\clusterSetOfCluster[p]{t,n}{B}$.
	\item The configurations $\gamma_z$ and $\beta_1$ are \graded{j}-adjacent for some transfer vector \graded{j}, and the pair $(c,b) \in \graded{j}$ with the edge label $(v,g)$. 
		
\end{enumerate}

We will now construct a transfer vector $\graded{i}$ where  $\graded{i}\subsetneq \graded{j}$ and $\clusterPair[\graded{i}]{\reflect[g]{\gamma_z}{b}{c}}{\beta_1}$.

Firstly, we express the configuration $\gamma_{z} = \parentheses{c_{u,y}}$,  the configuration $\beta_1= \parentheses{b_{u,y}}$, and the configuration $\reflect[g]{\gamma_z}{b}{c} = \parentheses{c'_{u,y}}$ for $u\in \nSet{t}$ and $y\in \nSet{n}$. 
 We have that the configurations $\gamma_z, \reflect[g]{\gamma_z}{b}{c}$ and $\beta_1$ are elements of $ \clusterSetOfCluster[p]{t,n}{A_0}$; thus, we have the equalities
	\[
	c_{u,y} = b_{u,y} = c'_{u,y}
	\] for $y<g$. 

We now address the case of static disks: if the post $c_{u,y} = b_{u,y} = q$ for $y\geq g$, then it must be the case that $q\notin \{b,c\}$ (otherwise, the transfer of disk $\disk{v,g}$ is prohibited). Thus, under the assumption that the disk $\disk{u,y}$ statically occupies post $q$ in  \clusterPair[\graded{j}]{\gamma_z}{\beta_1}, we have that $c'_{u,y} = b_{u,y} = q$.

We now address the case of active disks in \clusterPair[\graded{j}]{\gamma_z}{\beta_1}: as the post $c_{v,g} = c$, and the post $b_{v,g} = b$, we have that the post $c'_{v,g} = b_{v,g} =b$, and  the edge $(c,b) \notin \graded{i}$.

Let $(q,r) \in \graded{j}$ with the edge label $(w,j_w)$ where $(q,r) \neq (c,b)$. As per Definition \ref{definition::adjacency}.\ref{stackReadWrite}, it must be the case that $q \neq c$ and $r\neq b$.   Moreover, as $(c,b)\in \graded{j}$ with edge label $(v,g)$, it follows that 
$c_{u,y} \neq c$ and $b_{u,y} \neq b$ for $u\in \nSet{t}$ and $y>g$.

 Under these conditions, we condition by cases as follows. Assume that $h > j_w$ and $u\in \nSet{t}$:
\begin{enumerate}
	\item $q\neq b, r \neq c$: The post $c'_{w,j_w} = c_{w,j_w} = q$. The post $c_{y,h} \neq q$ (as per Definition \ref{definition::adjacency}.\ref{stackReadWrite}), and the post $c'_{u,h} \neq q$ (whether or not $c_{y,h} = b$). The post $b_{u,h} \neq r$; thus, the edge $(q,r) \in \graded{i}$ with with the edge label $(w,j_w)$.
	\item $q = b, r\neq c$:  The post $c'_{w,j_w} = c$, and as the post $c_{u,z} \neq b$, the post $c'_{u,z} \neq c$. The post $b_{u,h} \neq r$; thus, the edge $(c, r) \in \graded{i}$ with with the edge label $(w,j_w)$.
	\item $q\neq b, r = c$: We have that $c'_{w,j_w} = c_{w,j_w} = q$, and $c_{u,z} \neq q$; as $q\notin \{b,c\}$, it follows that $c'_{u,z} \neq q$ (whether or not $c_{u,z} = b$). The post $b_{u,h} \neq c$; thus, the edge $(q,c) \in \graded{i}$ with with the edge label $(w,j_w)$.
	\item $q = b, r = c$: the post $c'_{w,j_w} = c = b_{w,j_w}$; thus, the disk transfer of $\disk{w,j_w}$ is removed, and the edge $(b,c) \notin \graded{i}$.
\end{enumerate}

	Consequently, we have that either 
	$\clusterPair[\graded{i}]{ \reflect[g]{\gamma_z}{b}{c}}{\beta_1}$ or $\reflect[g]{\gamma_z}{b}{c} = \beta_1$, and 
	the inequality
	\[
	\tgec{ \reflect[g]{\gamma_z}{b}{c}, \beta_1} < \tgec{\gamma_z, \beta_1}
	\] holds, as the transfer of the disk $\disk{v,g}$ from post $c$ to post $b$ is not included in \graded{i}.

  Thus, as per Lemma \ref{lemma::clusterMappings}, we define the valid configuration sequence
	\[
	 \mu = \reflect[g]{\nu_A+\nu_C}{b}{c} + \nu_B;
	\] 
	this sequence is contained in $\clusterSetOfCluster[p]{t,n}{A}\cup \clusterSetOfCluster[p]{t,n}{B}$, and  $\numberOfTransfers{\mu}<\numberOfTransfers{\nu}$.

\end{proof}

\newpage

\section{The \code{Denoise} Method}\label{section::denoising}
Let $\Path$ denote an arbitrary configuration sequence.  The constructive methods within the proofs in Section \ref{section::results} can be iteratively applied to $\Path$ to yield a walk $\Path'$ that has the same starting and ending configurations, along with the property that $\transferLength{\Path'} \leq \transferLength{\Path}$.  

	\begin{algorithm}[H]\label{algorithm::denoise}
		\SetKwData{this}{this}
		\SetKwData{child}{child}
		\SetKwData{denoise}{denoise}
			$\Path  = \List{\alpha}{1}{m}$\;
			\For{$g : 1 \to n$}{
				$\graded{g} \gets g\cdot \graded{1}$\;

				\While{Theorem \ref{theorem::1} applies or Theorem \ref{theorem::2} applies}{
					\If{Theorem \ref{theorem::1} applies for $(\alpha_u, \alpha_v) \ (u < v)$}{
						Apply the translative cluster mappings in the proof of Theorem \ref{theorem::1} to remove $\List{\alpha}{u+1}{v}$\;
					}
				
					\If{Theorem \ref{theorem::2} applies for $(\alpha_u,\alpha_v)$, or for $(\alpha_v,\alpha_u)$ in the reversal of the sequence $\ (u < v )$}{
					
						Apply the  reflective cluster mappings in the proof of Theorem \ref{theorem::2} to remove $\List{\alpha}{u+1}{v-1}$\;
						
					}

 				}
			}
			
		\Return{}\;
		\caption{\code{denoise}(\Path)}
	\end{algorithm}

\bibliography{pToHBib}
	\bibliographystyle{plain}

\end{document}